\DeclareMathAlphabet{\mathcal}{OMS}{cmsy}{m}{n}
\newtheorem{theorem}{Theorem}[section]
\newtheorem*{theorem*}{Theorem}
\newtheorem{lemma}[theorem]{Lemma}
\newtheorem*{lemma*}{Lemma}
\newtheorem{corollary}[theorem]{Corollary}
\newtheorem{proposition}[theorem]{Proposition}
\newtheorem{remark}[theorem]{Remark}
\newtheorem{definition}[theorem]{Definition}
\newtheorem*{definition*}{Definition}
\newtheorem{question}[theorem]{Question}
\newtheorem*{question*}{Question}
\newtheorem{example}[theorem]{Example}
\newtheorem{examples}[theorem]{Examples}
\newtheorem{introthm}{Theorem}
\newtheorem{introcor}[introthm]{Corollary}
\def\revddots{\mathinner{\mkern1mu\raise\p@
\vbox{\kern7\p@\hbox{.}}\mkern2mu
\raise4\p@\hbox{.}\mkern2mu\raise7\p@\hbox{.}\mkern1mu}}
\newcommand{\bgl}{\begin{equation}} 
\newcommand{\egl}{\end{equation}}
\newcommand{\bgloz}{\begin{equation*}} 
\newcommand{\egloz}{\end{equation*}}
\newcommand{\bgln}{\begin{eqnarray}} 
\newcommand{\egln}{\end{eqnarray}}
\newcommand{\bglnoz}{\begin{eqnarray*}} 
\newcommand{\eglnoz}{\end{eqnarray*}}
\newcommand{\btheo}{\begin{theorem}}
\newcommand{\etheo}{\end{theorem}}
\newcommand{\btheooz}{\begin{theorem*}}
\newcommand{\etheooz}{\end{theorem*}}
\newcommand{\blemma}{\begin{lemma}}
\newcommand{\elemma}{\end{lemma}}
\newcommand{\blemmaoz}{\begin{lemma*}}
\newcommand{\elemmaoz}{\end{lemma*}}
\newcommand{\bproof}{\begin{proof}}
\newcommand{\eproof}{\end{proof}}
\newcommand{\bbew}{\begin{beweis}}
\newcommand{\ebew}{\end{beweis}}
\newcommand{\bremark}{\begin{remark}\em}
\newcommand{\eremark}{\end{remark}}
\newcommand{\bdefin}{\begin{definition}}
\newcommand{\edefin}{\end{definition}}
\newcommand{\bdefinoz}{\begin{definition*}}
\newcommand{\edefinoz}{\end{definition*}}
\newcommand{\bex}{\begin{example}}
\newcommand{\eex}{\end{example}}
\newcommand{\bexs}{\begin{examples}}
\newcommand{\eexs}{\end{examples}}
\newcommand{\bprop}{\begin{proposition}}
\newcommand{\eprop}{\end{proposition}}
\newcommand{\bcor}{\begin{corollary}}
\newcommand{\ecor}{\end{corollary}}
\newcommand{\bfa}{\begin{cases}} 
\newcommand{\efa}{\end{cases}}
\newcommand{\bquestion}{\begin{question}}
\newcommand{\equestion}{\end{question}}
\newcommand{\bquestionoz}{\begin{question*}}
\newcommand{\equestionoz}{\end{question*}}
\newcommand{\cA}{\mathcal A}
\newcommand{\cO}{\mathcal O}
\newcommand{\cY}{\mathcal Y}
\newcommand{\cZ}{\mathcal Z}
\def\Nz{\mathbb{N}}
\def\Tz{\mathbb{T}}
\def\Zz{\mathbb{Z}}
\def\1z{\mathbb{1}}
\newcommand{\an}[1]{``#1''} 
\newcommand{\ti}{\tilde}
\newcommand{\lori}{\longrightarrow}
\newcommand{\ma}{\mapsto} 
\newcommand{\onto}{\twoheadrightarrow} 
\newcommand{\into}{\hookrightarrow} 
\def\SEMI{\mbox{$\times\kern-2pt\vrule height5pt width.6pt \kern3pt $}}
\renewcommand{\dim}{{\rm dim}\,}
\newcommand{\rk}{{\rm rk}\,}
\newcommand{\ev}{\operatorname{ev}} 
\newcommand{\abs}[1]{\lvert#1\rvert} 
\newcommand{\norm}[1]{\left\|#1\right\|} 
\newcommand{\defeq}{\mathrel{:=}} 
\newcommand{\dop}{\text{: }} 
\newcommand{\ilim}{\varinjlim} 
\newcommand{\plim}{\varprojlim} 
\newcommand{\lge}{\left\{} 
\newcommand{\rge}{\right\}} 
\newcommand{\gekl}[1]{\lge #1 \rge} 
\newcommand{\menge}[2]{\gekl{ #1 \dop #2 }} 
\newcommand{\isom}{\xrightarrow{\raisebox{-1ex}[0ex][0ex]{$\sim$}}} 
\newcommand{\oset}[2]{%
  \mathop{#2}\limits^{\vbox to -1.66ex{%
  \kern -1.4ex\hbox{$#1$}\vss}}}
\newcommand{\pars}{\setlength{\parindent}{0cm} \setlength{\parskip}{0.5cm}}
\newcommand{\pari}{\setlength{\parindent}{0.5cm} \setlength{\parskip}{0cm}}
\newcommand{\nopar}{\setlength{\parindent}{0cm} \setlength{\parskip}{0cm}}
\begin{document}

\title{Constructing C*-diagonals in AH-algebras}

\thispagestyle{fancy}

\author{Xin Li}
\author{Ali I. Raad}

\address{Xin Li, School of Mathematics and Statistics, University of Glasgow, University Place, Glasgow G12 8QQ, United Kingdom}
\email{Xin.Li@glasgow.ac.uk}
\address{Ali I. Raad, Department of Mathematics, KU Leuven, 200B Celestijnenlaan, 3001 Leuven,
Belgium}
\email{ali.imadraad@kuleuven.be}

\subjclass[2010]{Primary 46L05, 46L35; Secondary 22A22}

\thanks{This project has received funding from the European Research Council (ERC) under the European Union's Horizon 2020 research
and innovation programme (grant agreement No. 817597).
The second author was supported by the Internal KU Leuven BOF project C14/19/088 and project G085020N funded by the Research Foundation Flanders (FWO)}

\begin{abstract}
We construct Cartan subalgebras and hence groupoid models for classes of AH-algebras. Our results cover all AH-algebras whose building blocks have base spaces of dimension at most one as well as Villadsen algebras, and thus go beyond classifiable simple C*-algebras.
\end{abstract}

\maketitle


\setlength{\parindent}{0cm} \setlength{\parskip}{0.5cm}

\section{Introduction}

Many important classes of C*-algebras can be described naturally as groupoid C*-algebras, for instance C*-algebras attached to group actions (crossed products) or semigroup actions on topological spaces (Cuntz-Krieger algebras or graph algebras) or AF-algebras. Kumjian and Renault \cite{Kum,Ren} developed the notions of C*-diagonals and Cartan subalgebras which provide a general framework for constructing groupoid models for C*-algebras. Based on these ideas, it was shown in \cite{Li18} that every classifiable simple C*-algebra is a (twisted) groupoid C*-algebra. The term \an{classifiable} refers to C*-algebras which have been classified in the context of the Elliott classification programme, which has been initiated by the work of Glimm, Dixmier, Bratteli and Elliott, and has seen tremendous progress in recent years (see \cite{KP, Phi, GLNa, GLNb, EGLN, TWW,EGLN17a,EGLN17b,GLI,GLII,GLIII} and the references therein).

The goal of the present paper is to construct Cartan subalgebras and hence (twisted) groupoid models for classes of C*-algebras which go beyond the classifiable simple case. The motivation for finding Cartan subalgebras is that they automatically produce underlying twisted groupoids such that the ambient C*-algebras can be written as the corresponding twisted groupoid C*-algebras (by the main results in \cite{Kum,Ren}). In a broader context, Cartan subalgebras in C*-algebras are attracting attention because of close connections to continuous orbit equivalence for topological dynamical systems, interactions with geometric group theory \cite{Li16,Li17,Li_DQH} as well as links to the UCT question \cite{BL16,BL17}. With this in mind, we expect that the groupoids we construct will be of independent interest, for instance from the point of view of topological dynamical systems.

We focus on approximately homogeneous C*-algebras (AH-algebras), i.e., C*-algebras which arise as inductive limits of homogeneous ones. For our first goal of constructing twisted groupoid models for general (i.e., not necessarily simple) AH-algebras, the notion of maximally homogeneous connecting maps (introduced in \cite{Tho_JOT92} in a special case) turns out to be crucial. Roughly speaking, a homomorphism between homogeneous C*-algebras is maximally homogeneous if its image has maximal dimension in each fibre. Such connecting maps allow us to apply the general machinery for constructing Cartan subalgebras and hence twisted groupoid models for inductive limit C*-algebras from \cite{BL17,Li18}. Then the question of which connecting maps can be approximated by maximally homogeneous ones becomes interesting, because such connecting maps may be replaced by maximally homogeneous ones without changing the inductive limit C*-algebra up to isomorphism. It turns out that this approximation problem has come up in the classification of AH-algebras \cite{Li97,Gon} and is related to questions in topology.

Our second goal is to construct Cartan subalgebras in classes of C*-algebras which lie outside of the scope of the Elliott classification programme for C*-algebras. This includes for example Villadsen algebras of the first and second kind \cite{Vil98,Vil99} as well as Tom's examples of non-classifiable C*-algebras \cite{Toms}. The general framework is given by AH-algebras with generalized diagonal connecting maps, special cases of which appear in \cite{Niu} and \cite{Goo}. Apart from covering the examples above, our construction also produces Cartan subalgebras in other C*-algebras, for instance Goodearl algebras \cite{Goo} (see also \cite[Example~3.1.7]{Ror}). At this point, we would like to mention that Giol and Kerr constructed non-classifiable simple crossed products of the form $C(X) \rtimes \Zz$ (see \cite{GK}), which by construction also contain canonical Cartan subalgebras (namely $C(X)$). 

The interest behind constructing Cartan subalgebras in these non-classifiable C*-algebras stems from the observation that either the underlying groupoids are not almost finite in the sense of \cite{Mat,Kerr,MW} or we obtain an interesting example of a groupoid together with a twist such that the untwisted groupoid C*-algebra is classifiable while the twisted groupoid C*-algebra is not. Therefore our construction produces interesting new examples of twisted groupoids which can be viewed as the underlying dynamical structures giving rise to non-classifiable C*-algebras. In this context, an interesting and natural task would be to characterize in dynamical terms those twisted groupoids or Cartan pairs whose C*-algebras are classifiable. We hope that the twisted groupoids constructed in the present paper will be of interest for this task by providing interesting test cases.

Let us now present the main results of this paper.
\nopar

\begin{introthm}[see Corollary~\ref{cor:DiagMaxHom}]
Unital AH-algebras with unital, injective and maximally homogeneous connecting maps have C*-diagonals.
\end{introthm}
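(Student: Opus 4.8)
The plan is to realize the given AH-algebra $A = \ilim (A_i, \phi_i)$, with each $A_i$ a unital homogeneous C*-algebra, as an inductive limit of C*-diagonals and then to invoke the general machinery for inductive limits from \cite{BL17,Li18}. Concretely, I would equip each building block $A_i$ with a C*-diagonal $D_i$ in such a way that $\phi_i(D_i) \subseteq D_{i+1}$ and $\phi_i$ restricts to a morphism of the underlying Cartan pairs (sending $D_i$-normalizers to $D_{i+1}$-normalizers and intertwining the conditional expectations); the cited machinery then guarantees that $D := \ilim D_i$ is a C*-diagonal of $A$. The two things to supply are therefore the diagonals $D_i$ on the building blocks and the verification that maximally homogeneous connecting maps are compatible with them.

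For the building blocks, each homogeneous $A_i = \bigoplus_s p_{i,s} M_{n_{i,s}}(C(X_{i,s})) p_{i,s}$ carries a standard C*-diagonal obtained fibrewise from the diagonal matrices. To make the $\phi_i$ compatible, I would put each connecting map into its standard (block-diagonal) normal form, described by a finite family of continuous ``eigenvalue maps'' $\lambda_1, \lambda_2, \ldots$ between the base spaces together with a standard-form unitary $u_{i+1}$. Absorbing $u_{i+1}$ into the choice of $D_{i+1}$ (that is, replacing the standard diagonal of $A_{i+1}$ by its conjugate under $u_{i+1}$, which is again a C*-diagonal) and proceeding inductively, I can arrange that each $\phi_i$ is literally block-diagonal with respect to $D_i$ and $D_{i+1}$. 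For such maps $\phi_i(D_i) \subseteq D_{i+1}$ is immediate, and the image of a $D_i$-normalizer is block-diagonal with partial-permutation-matrix blocks, hence a $D_{i+1}$-normalizer; unitality and injectivity are preserved, and the conditional expectations are intertwined. This is routine Elliott-type bookkeeping once the maps are in normal form.

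The heart of the matter, and the main obstacle, is to see that the resulting $D$ is not merely a regular abelian subalgebra but a genuine C*-diagonal of $A$, and this is exactly where maximal homogeneity enters. Maximal homogeneity means that in every fibre the eigenvalue maps take pairwise distinct values, so that in each fibre over a point $x$ the image $\phi_i(D_i)_x$ is the \emph{full} diagonal of the corresponding matrix algebra. Consequently its relative commutant in that fibre is abelian; passing to the limit, the relative commutant $\phi_{\infty,i}(D_i)' \cap A$ decreases to $D$, so that $D$ is maximal abelian, and no isotropy is created in the Kumjian--Renault (Weyl) groupoid, so that it remains principal---equivalently the unique extension property characterising a C*-diagonal (as opposed to a general Cartan subalgebra) is preserved in the limit. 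Were two eigenvalue maps to collide at a point, which is precisely the failure of maximal homogeneity, the fibre image would be a proper subdiagonal with non-abelian relative commutant, and the limiting diagonal could fail to be maximal abelian or to have the unique extension property. With maximal homogeneity verified, the connecting maps are morphisms of C*-diagonals in the sense required by \cite{BL17,Li18}, and the theorem follows by applying that machinery to the system $(A_i, D_i, \phi_i)$.
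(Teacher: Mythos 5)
Your overall strategy---equip the building blocks with C*-diagonals, check that the connecting maps send diagonal into diagonal and normalizers into normalizers and intertwine the conditional expectations, then invoke the inductive-limit machinery of \cite{BL17,Li18} (Theorem~\ref{thm:LimCartan})---is exactly the paper's, and your fibrewise use of maximal homogeneity (distinct eigenvalue points force the fibre image of the diagonal to be a full diagonal of the fibre) is also the key observation in the paper's Proposition~\ref{prop:MaxHomDiag}.

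However, there is a genuine gap in how you produce the compatible diagonals. You propose to ``put each connecting map into its standard (block-diagonal) normal form'', given by continuous eigenvalue maps $\lambda_1,\lambda_2,\ldots$ between the base spaces together with a diagonalizing unitary $u_{i+1}$, and then to absorb $u_{i+1}$ into the choice of $D_{i+1}$. Such a global normal form does not exist for general (even maximally homogeneous) unital homomorphisms between homogeneous C*-algebras; the paper explicitly remarks on this in Section~4 (``there might not exist eigenvalue functions $\lambda_y$ from $Z_{n+1}$ to $Z_n$ which are continuous''). Maximal homogeneity is a pointwise condition: it does give \emph{locally} defined continuous eigenvalue functions, because the eigenvalue points stay pairwise distinct, but globally there are monodromy obstructions (e.g.\ a twisted ``double cover'' map $C(S^1)\to C(S^1)\otimes M_2$ whose two eigenvalue points are interchanged as one traverses the circle) as well as bundle obstructions to finding a continuous diagonalizing unitary. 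Thomsen-type diagonalization, which is what your normal form amounts to, is available only in special situations such as one-dimensional base spaces (this is precisely what the paper's Theorem~\ref{thm:dim<=1} exploits, after lifting along maps from trees), or when it is built into the hypotheses, as for the generalized diagonal maps of Definition~\ref{def:GenDiag}. The paper's Proposition~\ref{prop:MaxHomDiag} circumvents the issue: it never diagonalizes $\varphi_n$, but instead defines $B_{n+1} \defeq C^*(C(Z_{n+1}), \varphi_n(B_n))$ directly, uses maximal homogeneity to show fibrewise, and then on small neighbourhoods, that this is a locally trivial diagonal subbundle---hence a C*-diagonal by \cite{LR}---and verifies the normalizer and expectation conditions by hand. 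A secondary inaccuracy: a homogeneous building block $p\,(C(X)\otimes M_n)\,p$ does not in general ``carry a standard C*-diagonal obtained fibrewise from the diagonal matrices'' (one needs, for instance, $p$ to be a sum of line bundles); this is why Corollary~\ref{cor:DiagMaxHom} assumes that $A_1$ has a C*-diagonal and then propagates it forward along the connecting maps, rather than choosing diagonals on all the $A_n$ independently and adjusting the maps afterwards.
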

We refer to Definition~\ref{def:MaxHom} for the precise definition of maximally homogeneous homomorphisms between homogeneous C*-algebras. In combination with the result (see Theorem~\ref{thm:dim=1}, which is based on \cite{Li97}) that for homogeneous C*-algebras with base spaces of dimension one, general connecting maps can be approximated by maximally homogeneous ones, we obtain the following consequence:

\begin{introthm}[see Theorem~\ref{thm:dim<=1}]
\label{introthm:dim<=1}
Unital AH-algebras whose building blocks have base spaces of dimension at most one have C*-diagonals.
\end{introthm}
Note that additional work is required to make sure that injective connecting maps can be approximated by injective, maximally homogeneous connecting maps (see Theorem~\ref{thm:dim=1}).
\pars

In particular, Theorem~\ref{introthm:dim<=1} covers all AI- and all A$\Tz$-algebras. In combination with \cite{GJLP}, we derive the following application:
\begin{introcor}[see Corollary~\ref{cor:AHbdddim}]
Every unital AH-algebra with bounded dimension, the ideal property, and torsion-free $K$-theory has a C*-diagonal.
\end{introcor}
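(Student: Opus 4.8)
The plan is to reduce to the situation already handled by Corollary~\ref{introcor:dim<=1} and then transport the C*-diagonal along an isomorphism. Let $A$ be a unital AH-algebra with bounded dimension, the ideal property, and torsion-free $K$-theory. First I would invoke the reduction theorem of \cite{GJLP}: every unital AH-algebra with the ideal property and bounded dimension is isomorphic to an inductive limit $A \cong \ilim (A_n, \varphi_n)$ in which each building block $A_n$ is a finite direct sum of corners of matrix algebras over $C(X)$, with each base space $X$ belonging to a fixed list of standard spaces, namely the one-point space, the interval $[0,1]$, the circle $\Tz$, and the higher-dimensional spaces $T_{\mathrm{II},k}$ and $T_{\mathrm{III},k}$.

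The decisive point is that the torsion-free hypothesis eliminates the higher-dimensional building blocks. The spaces $T_{\mathrm{II},k}$ and $T_{\mathrm{III},k}$ are exactly the members of the standard list whose occurrence injects torsion into $K_*(A)$, whereas the point, the interval, and the circle all have torsion-free topological $K$-theory. Since $K_*(A)$ is assumed torsion-free, the reduction of \cite{GJLP} can be arranged so that only the base spaces $\{\mathrm{pt}\}$, $[0,1]$, and $\Tz$ appear; all of these have dimension at most one. Hence $A$ is isomorphic to a unital AH-algebra all of whose building blocks have base spaces of dimension at most one.

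It then remains to apply Corollary~\ref{introcor:dim<=1} to this one-dimensional model to produce a C*-diagonal $D$ in it, and to pull $D$ back through the isomorphism $A \cong \ilim(A_n,\varphi_n)$. The defining properties of a C*-diagonal---a maximal abelian subalgebra that is regular and is the image of a faithful conditional expectation, together with the unique-extension property of pure states---are all preserved by $*$-isomorphisms, so the image of $D$ is a C*-diagonal in $A$.

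The main obstacle is the second step: one must extract from \cite{GJLP} the precise form of the standard building blocks and verify carefully that torsion-freeness of $K_*(A)$ forces all base spaces of dimension greater than one out of the reduction. This is where the interaction between the topology of the base spaces and the $K$-theoretic hypothesis does the real work; once a one-dimensional model has been secured, the rest is a formal combination of Corollary~\ref{introcor:dim<=1} with the isomorphism-invariance of the C*-diagonal property.
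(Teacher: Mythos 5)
Your overall route coincides with the paper's: invoke \cite{GJLP} to replace $A$ by an inductive limit whose building blocks have one-dimensional base spaces, then apply Corollary~\ref{cor:dim<=1} and transport the C*-diagonal along the isomorphism (C*-diagonality is indeed preserved by $*$-isomorphisms). The paper's proof is exactly this, in two lines, because the main theorem of \cite{GJLP} states verbatim that a unital AH-algebra with bounded dimension, the ideal property and torsion-free $K$-theory is an A$\Tz$-algebra; one quotes it as a black box and is done.

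The problem is your middle paragraph, where you attempt to re-derive this A$\Tz$-structure from a reduction theorem plus a formal torsion argument; as written, that step fails on two counts. First, the standard list of base spaces in the reduction theorems for AH-algebras is $\{\mathrm{pt}\}$, $[0,1]$, $S^1$, $S^2$, $T_{\mathrm{II},k}$, $T_{\mathrm{III},k}$ --- you omit $S^2$, which has dimension two and \emph{torsion-free} $K$-theory, so torsion-freeness of $K_*(A)$ cannot by itself force all base spaces of dimension greater than one out of the picture; eliminating $S^2$ is part of the genuine content of \cite{GJLP}. Second, the claim that occurrences of $T_{\mathrm{II},k}$ or $T_{\mathrm{III},k}$ ``inject torsion into $K_*(A)$'' is false: torsion classes in the building blocks can die under the connecting maps (an inductive limit of matrix algebras over $T_{\mathrm{II},k}$ whose connecting maps factor through point evaluations is even AF), so the presence of such blocks is perfectly consistent with torsion-free limit $K$-theory, and, conversely, removing them from a given inductive system is not a formal manipulation but requires the existence/uniqueness machinery behind \cite{GJLP}. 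Since the statement you actually need is literally the main theorem of \cite{GJLP}, the repair is simply to cite it in place of your second paragraph; the remaining steps (one-dimensional model, Corollary~\ref{cor:dim<=1}, isomorphism invariance) are fine and match the paper.
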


Let us now turn to a second class of AH-algebras. The precise definition of \an{generalized diagonal connecting maps} can be found in Definition~\ref{def:GenDiag}.
\begin{introthm}[see Theorem~\ref{thm:DiagDiagConn}]
Unital AH-algebras with unital, injective, generalized diagonal connecting maps have C*-diagonals.
\end{introthm}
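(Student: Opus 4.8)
The plan is to follow the strategy used in the maximally homogeneous case, the point being that generalized diagonal maps, although typically \emph{not} maximally homogeneous (so that Corollary~\ref{cor:DiagMaxHom} does not apply directly), are nonetheless compatible with the obvious diagonal subalgebras, so that the same inductive-limit machinery for Cartan pairs from \cite{BL17,Li18} applies. Write $A = \ilim (A_i, \phi_i)$ with homogeneous building blocks $A_i = \bigoplus_s p_{i,s} M_{n_{i,s}}(C(X_{i,s})) p_{i,s}$ and with $\phi_i$ unital, injective and generalized diagonal in the sense of Definition~\ref{def:GenDiag}. In each block I choose the standard diagonal subalgebra $B_{i,s} := p_{i,s}\rukl{C(X_{i,s}) \otimes \Delta_{n_{i,s}}}p_{i,s}$, where $\Delta_n \subseteq M_n$ denotes the diagonal matrices and the projections $p_{i,s}$ may be taken diagonal; this is a C*-diagonal of the block, with spectrum a finite disjoint union of copies of $X_{i,s}$ and associated groupoid $X_{i,s} \times R_{n_{i,s}}$, where $R_n$ is the (principal) pair groupoid on $n$ points. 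Thus each $(A_i, B_i)$ with $B_i := \bigoplus_s B_{i,s}$ is a C*-diagonal.

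The technical heart of the argument is to show that these diagonals are compatible with the connecting maps, that is, that each $\phi_i$ restricts to a Cartan-compatible map $(A_i, B_i) \to (A_{i+1}, B_{i+1})$. First, $\phi_i(B_i) \subseteq B_{i+1}$: by the prescribed form of a generalized diagonal map, $\phi_i$ is assembled from point evaluations along continuous maps between base spaces together with an allowed permutation and block structure, and each of these operations sends diagonal matrix-valued functions to diagonal matrix-valued functions. Second, $\phi_i$ carries $B_i$-normalizers to $B_{i+1}$-normalizers: the key observation is that any normalizer $v$ of $C(X) \otimes \Delta_n$ is a \emph{pointwise} normalizer, i.e.\ $v(x)$ normalizes $\Delta_n \subseteq M_n$ for every $x \in X$ (as one sees by taking constant $d \in \Delta_n$ in the normalizing relation $v d v^* \in C(X) \otimes \Delta_n$); pulling $v$ back along the maps occurring in $\phi_i$ and reassembling blockwise therefore again produces a pointwise normalizer, hence an element normalizing $B_{i+1}$. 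The case in which some of these maps are constant---which is precisely what makes the connecting maps fail to be maximally homogeneous, as in the Villadsen and Goodearl examples---causes no difficulty, since collapsing a fibre still preserves both diagonality and the pointwise-normalizer property.

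With compatibility established, the inductive-limit construction of \cite{BL17,Li18} applies to the system of C*-diagonals $(A_i, B_i)$ and connecting maps $\phi_i$, producing a Cartan subalgebra $B := \ilim(B_i, \phi_i|_{B_i})$ of $A$; here injectivity of the $\phi_i$ guarantees that the blockwise conditional expectations cohere to a faithful conditional expectation and that $B$ is maximal abelian, its relative commutant being computed as the limit of the relative commutants $B_i' \cap A_i = B_i$. Finally, since each building-block groupoid is principal and principality is inherited by the limit groupoid, the normalizers of $B$ have the unique extension property, so that $B$ is not merely a Cartan subalgebra but a C*-diagonal. I expect the normalizer-preservation step, together with the verification that maximality of the MASA survives the passage to the non-maximally-homogeneous limit, to be the main obstacle.
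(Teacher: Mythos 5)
There is a genuine gap, and it sits at the very first step of your construction: you assume that the block projections $p_{i,s}$ ``may be taken diagonal''. Over a connected base space $X$, a projection in $C(X)\otimes \Delta_n$ has each diagonal entry equal to a projection in $C(X)$, hence constant $0$ or $1$; so a projection is unitarily equivalent to a diagonal one if and only if the corresponding vector bundle is \emph{trivial}. This fails precisely for the examples the theorem is designed to cover: the Villadsen algebras of the first and second kind \cite{Vil98,Vil99} and Toms' examples \cite{Toms} are built from sums of \emph{nontrivial} line bundles, and this nontriviality is the source of perforation and of the exotic stable rank. For a non-diagonalizable $p$, the set $p\,(C(X)\otimes\Delta_n)\,p$ is not even a subalgebra (it is an algebra only when $p$ commutes with $C(X)\otimes\Delta_n$, which again forces triviality), so your candidate diagonals $B_{i,s}$ and the untwisted groupoid models $X_{i,s}\times R_{n_{i,s}}$ do not exist in the generality needed. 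This is why the paper imposes, as part of the standing hypotheses behind Definition~\ref{def:GenDiag}, that each $p_n$ is a sum of line bundles over every connected component and each $q_y$ is zero or a line bundle, and then invokes the construction of \cite[\S~6.1]{Li18} (Lemmas~6.1 and 6.2 there) to produce Cartan subalgebras of the cut-downs whose groupoid models are genuinely \emph{twisted} groupoids $(G_n,\Sigma_n)$; the connecting maps are then modelled by intermediate twisted groupoids $(H_n,T_n)$ so that Proposition~\ref{prop:HomGPDmodel} and Theorem~\ref{thm:LimCartan} apply.

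The twist is not a technicality one can argue away: Corollary~\ref{cor:sr} shows that for Villadsen algebras of the second type the twisted groupoid C*-algebra $C^*_r(\bar{G},\bar{\Sigma})$ has stable rank $m\geq 2$ while the untwisted $C^*_r(\bar{G})$ has stable rank one, so these Cartan pairs admit no untwisted pair-groupoid-bundle model of the kind your argument produces. Your overall architecture --- choose diagonals in the blocks, check that generalized diagonal maps send diagonals to diagonals and normalizers to normalizers, intertwine the expectations, then run the inductive-limit machinery of \cite{BL17,Li18} and use principality of the building-block groupoids --- does parallel the paper's proof of Theorem~\ref{thm:DiagDiagConn}, and it would be essentially correct in the special case where all $p_n^i$ and $q_y$ are trivial bundles (e.g.\ for Goodearl algebras with trivial cut-downs). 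But as written it proves only that special case; the missing idea is the passage to sums of nontrivial line bundles via twisted groupoids, which is the actual crux of the theorem.
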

\nopar

This allows us to cover several classes of C*-algebras, including non-classifiable ones.
\begin{introcor}[see Corollary~\ref{cor:DiagDiagConn_Ex}]
The following classes of examples have C*-diagonals:
\begin{itemize}
\item Villadsen algebras of the first kind \cite{Vil98},
\item Tom's examples of non-classifiable C*-algebras \cite{Toms},
\item Goodearl algebras \cite{Goo} (see also \cite[Example~3.1.7]{Ror}), 
\item AH-algebras models for dynamical systems in \cite[Example~2.5]{Niu}, 
\item Villadsen algebras of the second kind \cite{Vil99}.
\end{itemize}
\end{introcor}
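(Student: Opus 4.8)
The plan is to reduce everything to Theorem~\ref{thm:DiagDiagConn}: for each of the five classes I would exhibit an inductive limit presentation $A = \varinjlim (A_n, \varphi_n)$ in which every building block $A_n$ is a homogeneous C*-algebra and every connecting map $\varphi_n$ is unital, injective, and of generalized diagonal form in the sense of Definition~\ref{def:GenDiag}. Once such a presentation is in hand, the C*-diagonal is immediate from Theorem~\ref{thm:DiagDiagConn}, and no further analysis of the individual algebra is required.

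First I would recall the standard inductive limit descriptions from the cited sources \cite{Vil98,Vil99,Toms,Goo,Niu}. In each case the building blocks have the form $A_n = \bigoplus_j C(X_{n,j}) \otimes M_{[n,j]}$, where the $X_{n,j}$ are compact metric spaces (finite products of spheres for the Villadsen and Toms constructions, intervals or finite CW complexes for the Goodearl algebras and for Niu's Example~2.5), and the connecting maps are assembled fibrewise from lists of eigenvalue maps. The content of a generalized diagonal connecting map is precisely that, after conjugating by a continuous path of unitaries, $\varphi_n$ sends $a$ to a block-diagonal matrix whose blocks are of the form $a \circ \lambda$ for continuous maps $\lambda \colon X_{n+1} \to X_n$ of base spaces, the individual blocks being either genuine point evaluations (constant $\lambda$) or pullbacks along coordinate projections and seed maps. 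I would then check, construction by construction, that the defining connecting maps are literally of this shape.

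Second, I would attend to unitality and injectivity, which is where the bookkeeping lives. Unitality holds because in all these presentations the eigenvalue lists exhaust the matrix size, so the unit is preserved (and a non-unital presentation can be passed to the unital corner). Injectivity of $\varphi_n$ holds provided the list of eigenvalue maps separates the points of $X_n$ and includes at least one map with dense image; in the standard presentations this is guaranteed because each stage includes the coordinate projections together with sufficiently many point evaluations. Where a given construction is stated with maps that fail this on the nose, one telescopes the system to a subsequence so that the composite connecting maps become injective, without changing the limit.

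The main obstacle I anticipate is the case of Villadsen algebras of the second kind \cite{Vil99}. There the eigenvalue maps are not merely coordinate projections of a product of spheres but are built from more elaborate continuous maps between the seed spaces, chosen so as to force perforation in the Cuntz semigroup; verifying that these still fall under Definition~\ref{def:GenDiag}, rather than some strictly larger class, is the delicate point, and it is exactly the reason the theorem is phrased for \emph{generalized} diagonal connecting maps rather than merely diagonal ones. Once that verification is carried out, the remaining four classes are comparatively routine applications of the same template, and the corollary follows by invoking Theorem~\ref{thm:DiagDiagConn} in each case.
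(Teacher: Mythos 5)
Your overall strategy is the right one and matches the paper's: the paper proves Corollary~\ref{cor:DiagDiagConn_Ex} precisely by observing that each of the five classes admits a presentation fitting the setting of Section~\ref{s:DiagConn} and then invoking Theorem~\ref{thm:DiagDiagConn}. However, your checklist of what must be verified is incomplete in a way that matters. The setting of Theorem~\ref{thm:DiagDiagConn} is not just ``unital, injective, generalized diagonal connecting maps'': it also requires, and the paper flags this as \emph{crucial}, that over each connected component of $Z_n$ the projection $p_n$ is a sum of line bundles, and that each $q_y$ restricts to a line bundle on exactly one component $Z_{n+1}^{j(y)}$ and vanishes on the others. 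This is what makes the groupoid models of \cite[\S~6.1]{Li18} applicable to the corners $p_n^i(C(Z_n^i) \otimes M_{r_n})p_n^i$. For Goodearl algebras, Niu's Example~2.5, Villadsen's first kind and Toms' examples this condition is trivially satisfied ($p_n$ is the unit and the $q_y$ are constant rank-one matrix units), but for Villadsen algebras of the second kind \cite{Vil99} it is exactly the nontrivial point: there the $p_n$ are genuinely nontrivial projections and the $q_y$ are nontrivial line bundles, and one must check that these projections are sums of line bundles and that Villadsen's identification $\alpha$ corresponds to the fixed isomorphism $M_{r_n} \otimes M_{s_n} \cong M_{r_{n+1}}$ in Definition~\ref{def:GenDiag} (this is the remark the paper makes). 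You instead identify the delicate point for \cite{Vil99} as whether the eigenvalue maps fall under Definition~\ref{def:GenDiag}; that is not where the difficulty lies (they are again coordinate projections and point evaluations), so your plan spends its effort in the wrong place and omits the hypothesis that actually needs checking.

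A second, smaller error: your fallback for injectivity --- ``one telescopes the system to a subsequence so that the composite connecting maps become injective'' --- cannot work, since kernels only grow under composition; if some $\varphi_n$ kills an ideal, so does every composite through it. The correct repair, which the paper uses elsewhere, is \cite[Theorem~2.1]{EGL05}, which replaces the building blocks (not merely the indexing) to make connecting maps injective; and note that in the Section~\ref{s:DiagConn} setting injectivity is the concrete condition $\bigcup_{y \in \cY(n)} \lambda_y(Z_{n+1}) = Z_n$, for which ``separating the points of $Z_n$'' is irrelevant (density of the union of the images is what one needs). In the five classes at hand the standard presentations already satisfy this covering condition, so no repair is needed, but as stated your contingency plan is a step that would fail.
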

\pars

The last class of examples, Villadsen algebras of the second type, leads to the following interesting phenomenon:
\begin{introcor}[see Corollary~\ref{cor:sr}]
\label{introcor:sr}
For each $m = 2, 3, \dotsc$ or $m = \infty$, there exists a twisted groupoid $(\bar{G},\bar{\Sigma})$ such that $C^*_r(\bar{G},\bar{\Sigma})$ is a unital, separable, simple C*-algebra of stable rank $m$, whereas $C^*_r(\bar{G})$ is a unital, separable, simple C*-algebra of stable rank one. 
\end{introcor}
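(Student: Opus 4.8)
The plan is to run Villadsen algebras of the second kind through Theorem~\ref{thm:DiagDiagConn}. For each $m \in \{2,3,\dotsc\} \cup \{\infty\}$ fix a unital, separable, simple Villadsen algebra $A_m$ of the second kind (\cite{Vil99}) of stable rank $m$. These algebras carry unital, injective, generalized diagonal connecting maps (Definition~\ref{def:GenDiag}), so Theorem~\ref{thm:DiagDiagConn} — the source of Corollary~\ref{cor:DiagDiagConn_Ex} — equips $A_m$ with a C*-diagonal $B$, and the Kumjian--Renault correspondence \cite{Kum,Ren} turns the diagonal pair $(A_m,B)$ into a principal, twisted étale groupoid $(\bar G,\bar\Sigma)$ with $C^*_r(\bar G,\bar\Sigma) \cong A_m$. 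This disposes of the twisted half of the statement at once: $C^*_r(\bar G,\bar\Sigma)$ is unital, separable, simple and has stable rank $m$.

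Everything then hinges on the untwisted algebra $C^*_r(\bar G)$. Unitality and separability are immediate, as $\bar G$ is a second countable étale groupoid with compact unit space $\Omega = \widehat B$. Simplicity I would read off the groupoid: $\bar G$ is principal and minimal (the latter encoding simplicity of $A_m$), and the reduced C*-algebra of a Hausdorff, principal, minimal étale groupoid is simple irrespective of the twist, so $C^*_r(\bar G)$ is simple. Moreover $C^*_r(\bar G)$ is an inductive limit of the twist-free homogeneous pieces underlying $\bar G$, hence nuclear (the groupoid is amenable) and stably finite, and it carries a tracial state coming from a $\bar G$-invariant probability measure on $\Omega$.

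The crux is stable rank one, and here I expect the real work. The idea is that discarding $\bar\Sigma$ replaces the twisted gluing of matrix units by the trivial one, so that $C^*_r(\bar G)$ is a diagonal AH-algebra of the kind considered in \cite{Goo,Niu} rather than a genuinely \emph{generalized} diagonal one; such simple diagonal AH-algebras are classifiable. Concretely, the cleanest route is to verify that the untwisted groupoid $\bar G$ is almost finite in the sense of \cite{Mat,Kerr,MW} and to deduce $\cZ$-stability of $C^*_r(\bar G)$ from \cite{Kerr}; combined with simplicity, stable finiteness and unitality, R{\o}rdam's theorem then forces stable rank one. This realizes precisely the favourable branch of the dichotomy announced in the introduction, in which the untwisted C*-algebra is classifiable while the twisted one is not.

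The main obstacle is isolating the effect of the twist. One must check that removing $\bar\Sigma$ genuinely trivializes the gluing responsible for Villadsen's large stable rank, and not merely conjugate the connecting maps by inner unitaries — which would leave the inductive limit, and hence the stable rank, unchanged. Establishing the required regularity of $C^*_r(\bar G)$ (almost finiteness of $\bar G$, or a direct identification of $C^*_r(\bar G)$ as a classifiable diagonal AH-algebra) is therefore the heart of the argument; once it is in place, the stable rank drops from $m$ to one solely on account of untwisting, which is exactly the phenomenon the corollary records.
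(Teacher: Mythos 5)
Your handling of the twisted half (run Villadsen algebras of the second kind through Theorem~\ref{thm:DiagDiagConn}/Corollary~\ref{cor:DiagDiagConn_Ex} and invoke Kumjian--Renault) and of unitality, separability and simplicity of $C^*_r(\bar{G})$ (minimality of $\bar{G}$ follows from simplicity of the twisted algebra, and minimal $+$ principal $+$ \'etale $+$ Hausdorff gives simplicity untwisted) matches the paper. The gap is exactly where you locate the ``real work'': stable rank one. Your route --- prove almost finiteness of $\bar{G}$ in the sense of \cite{Mat,Kerr,MW}, deduce $\cZ$-stability, then apply R{\o}rdam's theorem --- cannot work for these examples. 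Untwisting only trivializes the line bundles $q_y$; it does not change the base spaces $Z_n$ (whose dimension grows proportionally to the matrix size) nor the eigenvalue maps $\lambda_y$ (coordinate projections and point evaluations). So $C^*_r(\bar{G})$ is a simple diagonal AH-algebra of precisely the kind occurring in Villadsen's first construction \cite{Vil98} and in Toms' examples \cite{Toms}: flat dimension growth, positive radius of comparison, failure of $\cZ$-stability. Your assertion that ``such simple diagonal AH-algebras are classifiable'' is false --- Toms' non-classifiable algebras \emph{are} simple diagonal AH-algebras --- and correspondingly there is no reason for $\bar{G}$ to be almost finite; the branch of the introduction's dichotomy realized here is the opposite of the ``favourable'' one you invoke. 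Even setting aside that the hypothesis is expected to fail, nothing you cite establishes almost finiteness of a groupoid whose unit space is an inverse limit of high-dimensional spaces, so the proposal replaces the original problem by a harder (and likely unsolvable) one.

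The paper's actual argument is short and avoids regularity altogether. Using the explicit groupoid models for building blocks and connecting maps from \cite[\S~6]{Li18}, it identifies $C^*_r(\bar{G}) \cong \ilim_n \gekl{A_n,\ti{\varphi}_n}$, where $\ti{\varphi}_n(a) = \sum_{y} (a \circ \lambda_y) \otimes \ti{q}_y$ and $\ti{q}_y = 1_{Z_{n+1}^{j(y)}} \otimes e_y$ are pairwise orthogonal \emph{trivial} rank-one projections; since the construction in \cite{Vil99} starts from the trivial line bundle $p_1$, all bundles in sight untwist to trivial ones, so this limit is an honest diagonal AH-algebra with the same eigenvalue maps (hence still simple). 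Stable rank one is then exactly the main result of Elliott--Ho--Toms \cite[\S~4]{EHT}: every unital \emph{simple} AH-algebra with diagonal connecting maps has stable rank one, with no $\cZ$-stability, classifiability, or dimension-growth hypothesis. So the ``direct identification of $C^*_r(\bar{G})$ as a diagonal AH-algebra'' that you mention in passing is the correct move, but the conclusion must be drawn from \cite{EHT}, not from classification; with that substitution your outline closes, and without it the proof of the untwisted half is missing.
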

\nopar

Stable rank is a notion of dimension in noncommutative topology. The property of stable rank one has important consequences regarding the structure of C*-algebras. Corollary~\ref{introcor:sr} shows that stable rank depends not only on the underlying groupoid model for a C*-algebra, but also on the twist.
\pars

This paper is partly based on contents of the PhD thesis of the second author, completed at Queen Mary University of London and the University of Glasgow.

\section{Preliminaries}

\subsection{AH-algebras}

An approximately homogeneous (AH) C*-algebra is of the form $A = \ilim_n \gekl{A_n, \varphi_n}$, with $A_n = \bigoplus_i A_n^i$ for all $n \in \Nz$, where $i$ runs through a finite index set which depends on $n$, and $A_n^i$ is a homogeneous C*-algebra over the compact, connected, Hausdorff space $Z_n^i$. In other words, $A_n^i$ is the C*-algebras of continuous sections of a locally trivial C*-bundle $\cA_n^i$ over $Z_n^i$ with fibre $M_{r_n^i}$. 

We will focus on homogeneous C*-algebras of the following form: for each $n$ and $i$, let $p_n^i \in C(Z_n^i) \otimes M_{r_n^i}$ be projections of rank $r_n^i$, and then our building block is given by $A_n \defeq \bigoplus_i p_n^i ( C(Z_n^i) \otimes M_{r_n^i} ) p_n^i$, and $A = \ilim_n \gekl{A_n, \varphi_n}$.

In the above, $\varphi_n$ are homomorphisms $A_n \to A_{n+1}$. Since we will focus on unital AH-algebras in this paper, we assume throughout that the connecting maps are unital.
\pari

Moreover, for our construction of C*-diagonals, it is important that the connecting maps are injective. This can always be arranged without changing the inductive limit C*-algebra up to isomorphism (see \cite[Theorem~2.1]{EGL05}).

For a criterion for simplicity of AH-algebras, we refer the reader to \cite[Proposition~3.1.2]{Ror}.

Here is a sufficient condition, which we will use, for when two inductive limit C*-algebras are isomorphic (see \cite[\S~2.3]{Ror} for a more general criterion): 
\nopar

\begin{lemma}\label{lemma: sufficient iso limit}
Let $\varinjlim \{A_n,\phi_n\}$ and $\varinjlim \{A_n,\psi_n\}$ be inductive limit C*-algebras. For $n, k \in \mathbb{N}$ with $k <n$ let $\phi_{n,k}=\phi_{n-1} \circ \phi_{n-2} \circ \ldots \circ \phi_k$ (and define $\psi_{n,k}$ similarly). Let $\mathcal{F}_n \subset A_n$ be a finite set generating $A_n$ such that $\mathcal{F}_n$ contains $$\big(\bigcup\limits_{k=1}^{n-1} \phi_{n,k}(\mathcal{F}_k) \big) \cup \big( \bigcup\limits_{k=1}^{n-1} \psi_{n,k}(\mathcal{F}_k) \big),$$ and assume that $$\norm{\phi_n(a)-\psi_n(a)}< 2^{-n},$$ for all $a \in \mathcal{F}_n$, $n \in \mathbb{N}$. Then $\varinjlim (A_n,\phi_n) \cong \varinjlim (A_n,\psi_n)$.
\end{lemma}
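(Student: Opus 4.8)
The plan is to run Elliott's approximate-intertwining argument by hand, exploiting the fact that the two systems share the same algebras $A_n$ and differ only in the connecting maps. Write $A = \varinjlim\{A_n, \phi_n\}$ and $B = \varinjlim\{A_n, \psi_n\}$, and let $\phi_{\infty,m}\colon A_m \to A$ and $\psi_{\infty,m}\colon A_m \to B$ be the canonical maps into the limits (so $\phi_{\infty,m} = \phi_{\infty,n}\circ\phi_{n,m}$ for $m \le n$, and similarly for $\psi$). I will construct mutually inverse $*$-homomorphisms $\Phi\colon A \to B$ and $\Psi\colon B \to A$.

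First I would build $\Phi$. For fixed $m$ and $n \ge m$ consider the $*$-homomorphisms $\Theta_n^{(m)} \defeq \psi_{\infty,n}\circ\phi_{n,m}\colon A_m \to B$. For $a \in \mathcal{F}_m$ the sequence $(\Theta_n^{(m)}(a))_n$ is Cauchy: using $\phi_{n+1,m} = \phi_n\circ\phi_{n,m}$ and $\psi_{\infty,n} = \psi_{\infty,n+1}\circ\psi_n$, the consecutive difference equals $\psi_{\infty,n+1}\bigl(\phi_n(b) - \psi_n(b)\bigr)$ with $b \defeq \phi_{n,m}(a)$, which has norm $< 2^{-n}$ because the hypothesis forces $b \in \mathcal{F}_n$ (as $\phi_{n,m}(\mathcal{F}_m)\subseteq \mathcal{F}_n$) and $\psi_{\infty,n+1}$ is contractive. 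Since the set of $a \in A_m$ for which $(\Theta_n^{(m)}(a))_n$ converges is a norm-closed $*$-subalgebra containing the generating set $\mathcal{F}_m$, the limit $\Theta^{(m)} \defeq \lim_n \Theta_n^{(m)}$ exists on all of $A_m$ and is a $*$-homomorphism. The relation $\phi_{n,m+1}\circ\phi_m = \phi_{n,m}$ makes the family $(\Theta^{(m)})_m$ compatible with the $\phi$-system, so it descends to $\Phi\colon A \to B$ with $\Phi\circ\phi_{\infty,m} = \Theta^{(m)}$. Exchanging the roles of $\phi$ and $\psi$ yields $\Psi\colon B \to A$ with $\Psi\circ\psi_{\infty,m} = \lim_n \phi_{\infty,n}\circ\psi_{n,m}$.

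The heart of the argument is verifying $\Psi\circ\Phi = \id_A$ (and symmetrically $\Phi\circ\Psi = \id_B$), which I would do with a telescoping estimate comparing the two composite connecting maps. Writing $\psi_{p,n} - \phi_{p,n} = \sum_{j=n}^{p-1} \phi_{p,j+1}\circ(\psi_j - \phi_j)\circ\psi_{j,n}$ and feeding in an element $c_n \defeq \phi_{n,m}(a)$ with $a \in \mathcal{F}_m$, the nesting condition forces each $\psi_{j,n}(c_n)$ to lie in $\mathcal{F}_j$, so $\norm{\psi_{p,n}(c_n) - \phi_{p,n}(c_n)} < \sum_{j\ge n} 2^{-j} = 2^{-n+1}$. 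Applying $\phi_{\infty,p}$ and letting $p \to \infty$ shows $\Psi(\psi_{\infty,n}(c_n))$ lies within $2^{-n+1}$ of $\phi_{\infty,n}(c_n) = \phi_{\infty,m}(a)$; letting $n \to \infty$ then gives $\Psi(\Phi(\phi_{\infty,m}(a))) = \phi_{\infty,m}(a)$. Thus $\Psi\circ\Phi$ agrees with $\id_A$ on $\bigcup_m \phi_{\infty,m}(\mathcal{F}_m)$, a set generating $A$ as a C*-algebra, so the two $*$-homomorphisms coincide. The same computation with $\phi$ and $\psi$ swapped gives $\Phi\circ\Psi = \id_B$, whence $\Phi$ is the desired isomorphism.

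The step I expect to be the main obstacle is the bookkeeping of the generating-set condition: the estimate $\norm{\phi_n(a) - \psi_n(a)} < 2^{-n}$ is only available for $a \in \mathcal{F}_n$, so at each stage I must check that the concrete elements being estimated — the iterated images $\phi_{n,m}(a)$ and $\psi_{j,n}(c_n)$ — genuinely belong to the relevant $\mathcal{F}_j$. This is precisely what the assumption that $\mathcal{F}_n$ contains $\bigl(\bigcup_k \phi_{n,k}(\mathcal{F}_k)\bigr) \cup \bigl(\bigcup_k \psi_{n,k}(\mathcal{F}_k)\bigr)$ supplies, and the passage from generators to the full algebras rests on the maps being contractive $*$-homomorphisms together with a standard $3\varepsilon$ closedness argument.
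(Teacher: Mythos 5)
Your proof is correct. The paper disposes of this lemma in two lines: it observes that the hypotheses say precisely that the identity maps on the $A_n$ together with the diagonal maps $\psi_n$ form an approximate intertwining between the two systems in the sense of Definition 2.3.1 of \cite{Ror} (equivalently, Definition 2 of \cite{klaus3}), and then quotes Proposition 2.3.2 of \cite{Ror} (Theorem 3 of \cite{klaus3}) to conclude that the limits are isomorphic. What you have done is re-prove that cited intertwining theorem in the special case at hand: your maps $\Theta^{(m)} = \lim_n \psi_{\infty,n}\circ\phi_{n,m}$ and their $\psi$-counterparts are exactly the homomorphisms the intertwining machinery produces; your Cauchy estimate via $\phi_{n,m}(\mathcal{F}_m)\subseteq\mathcal{F}_n$ and summability of $2^{-n}$ is the standard one; and your telescoping identity $\psi_{p,n}-\phi_{p,n}=\sum_{j=n}^{p-1}\phi_{p,j+1}\circ(\psi_j-\phi_j)\circ\psi_{j,n}$ is the computation hidden inside the proof that the two induced maps on the limits are mutually inverse. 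All of your steps check out, including the two places where care is genuinely needed: the nesting hypothesis does put every iterated image you estimate (both $\phi_{n,m}(a)$ and $\psi_{j,n}(c_n)$, the latter with the convention $\psi_{n,n}=\id$) inside the correct $\mathcal{F}_j$, and the passage from the generating sets to the full algebras is legitimate because the relevant sets of convergence (respectively of agreement of two $*$-homomorphisms) are closed $*$-subalgebras, using equicontractivity. So your route is an unpacked, self-contained version of the paper's citation: it buys independence from \cite{Ror} and \cite{klaus3} and makes visible exactly where each hypothesis enters, at the cost of the bookkeeping that the paper's appeal to the standard approximate intertwining result avoids.
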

\begin{proof}
We obtain an approximate intertwining in the sense of Definition 2.3.1 in \cite{Ror} or Definition 2 in \cite{klaus3}. Indeed, in the latter definition just choose the vertical maps to be the identity and the diagonal maps to be the $\psi_n$'s. The result follows by Theorem 3 in \cite{klaus3}, or Proposition 2.3.2 in \cite{Ror}.
\end{proof}
\pars

\subsection{Cartan subalgebras in inductive limit C*-algebras}
\label{ss:CartanLimC}

\bdefin[\cite{Kum,Ren}, in the unital case]
\label{def:CartanSubalgebra}
A sub-C*-algebra $B$ of a unital C*-algebra $A$ is called a Cartan subalgebra if
\nopar

\begin{itemize}
\item $B$ is maximal abelian;
\item $B$ is regular, i.e., $N_A(B) \defeq \menge{n \in A}{n B n^* \subseteq B \ \text{and} \ n^* B n \subseteq B}$ generates $A$ as a C*-algebra;
\item there exists a faithful conditional expectation $P: \: A \onto B$.
\end{itemize}

A pair $(A,B)$, where $B$ is a Cartan subalgebra of a C*-algebra $A$, is called a Cartan pair.
\pari

A Cartan subalgebra $B$ is called a C*-diagonal if $(A,B)$ has the unique extension property, i.e., every pure state of $B$ extends uniquely to a (necessarily pure) state of $A$.
\edefin
\nopar

Given two Cartan pairs $(A_1,B_1)$ and $(A_2,B_2)$, we write $(A_1,B_1) \cong (A_2,B_2)$ if there exists an isomorphism $\varphi: \: A_1 \isom A_2$ with $\varphi(B_1) = B_2$.
\pars

\btheo[\cite{Kum,Ren} and \cite{KM,Raa}]
\label{def:KumjianRenault}
For every Cartan pair $(A,B)$, there exists a (up to isomorphism) unique twisted groupoid $(G,\Sigma)$ such that $G$ is {\'e}tale, Hausdorff, locally compact and effective, such that $(A,B) \cong (C^*_r(G,\Sigma),C_0(G^{(0)}))$. In addition, $B$ is a C*-diagonal if and only if $G$ is principal.
\etheo

In the setting we are interested in, the two notions of Cartan subalgebras and C*-diagonals coincide.
\bprop[{\cite[Proposition~6.1]{Ren}}]
\label{prop:Cartan=Diag}
If $A$ is a homogeneous C*-algebra, then every Cartan subalgebra of $A$ is a C*-diagonal.
\eprop

Recall the results in \cite{BL17} and \cite[\S~5]{Li18} implying when an inductive limit of Cartan subalgebras $B \defeq \ilim_n \gekl{B_n,\varphi_n}$ is a Cartan subalgebra of the inductive limit $A = \ilim_n \gekl{A_n,\varphi_n}$. 
\nopar

\btheo
\label{thm:LimCartan}
Let $(A_n,B_n)$ be Cartan pairs with normalizers $N_n \defeq N_{A_n}(B_n)$ and faithful conditional expectations $P_n: \: A_n \onto B_n$. Let $\varphi_n: \: A_n \to A_{n+1}$ be injective homomorphisms with $\varphi_n(B_n) \subseteq B_{n+1}$, $\varphi_n(N_n) \subseteq N_{n+1}$ and $P_{n+1} \circ \varphi_n = \varphi_n \circ P_n$ for all $n$. Then $\ilim \gekl{B_n,\varphi_n}$ is a Cartan subalgebra of $\ilim \gekl{A_n,\varphi_n}$.
\pari

If all $B_n$'s are C*-diagonals, then $\ilim \gekl{B_n,\varphi_n}$ is a C*-diagonal of $\ilim \gekl{A_n,\varphi_n}$.
\etheo
\pars

The following explains our assumptions on the connecting maps. As above, let $(A_n,B_n)$ be Cartan pairs with normalizers $N_n \defeq N_{A_n}(B_n)$ and faithful conditional expectations $P_n: \: A_n \onto B_n$. Let $\varphi_n: \: A_n \to A_{n+1}$ be injective homomorphisms. Let $(G_n,\Sigma_n)$ be the twisted groupoid models for $(A_n,B_n)$.
\bprop
\label{prop:HomGPDmodel}
The following are equivalent:
\nopar

\begin{enumerate}
\item[(i)] $\varphi_n(B_n) \subseteq B_{n+1}$, $\varphi(N_{n}) \subseteq N_{n+1}$, $P_{n+1} \circ \varphi_n = \varphi_{n} \circ P_n$;
\item[(ii)] There exists a twisted groupoid $(H_n,T_n)$, with $H_n$ \'{e}tale and effective, and twisted groupoid homomorphisms $(i_n,\imath_n): \: (H_n,T_n) \to (G_{n+1},\Sigma_{n+1})$, $(\dot{p}_n,p_n): \: (H_n,T_n) \to (G_n,\Sigma_n)$, where $i_n$ is an embedding with open image and $\dot{p}_n$ is surjective, proper and fibrewise bijective, such that $\varphi_n = (\imath_n)_* \circ (p_n)^*$.
\end{enumerate}
\eprop
\pars

Let us recall the construction of the twisted groupoid model $(\bar{G},\bar{\Sigma})$ for the pair $(A,B)$. Following \cite[\S~5]{Li18}, define $(G_{n,0}, \Sigma_{n,0}) \defeq (G_n,\Sigma_n)$, $(G_{n,m+1}, \Sigma_{n,m+1}) \defeq p_{n+m}^{-1}(G_{n,m},\Sigma_{n,m}) \subseteq (H_{n+m},T_{n+m})$ for all $n$ and $m = 0, 1, \dotsc$, $(\bar{G}_n,\bar{\Sigma}_n) \defeq \plim_m \gekl{(G_{n,m},\Sigma_{n,m}), p_{n+m}}$ for all $n$. The inclusions $(H_n,T_n) \into (G_{n+1},\Sigma_{n+1})$ induce embeddings with open image $\bar{i}_n: \: (\bar{G}_n,\bar{\Sigma}_n) \into (\bar{G}_{n+1},\bar{\Sigma}_{n+1})$, allowing us to define $(\bar{G},\bar{\Sigma}) \defeq \ilim \gekl{(\bar{G}_n,\bar{\Sigma}_n), \bar{i}_n}$. As explained in \cite[\S~5]{Li18}, $(\bar{G},\bar{\Sigma})$ is a groupoid model for $(A,B)$ in the sense that we have a canonical isomorphism $A \isom C^*_r(\bar{G},\bar{\Sigma})$ sending $B$ to $C_0(\bar{G}^{(0)})$.

\section{Cartan subalgebras in AH-algebras with maximally homogeneous connecting maps}

In the following, we introduce and study the notion of \an{maximally homogeneous} maps, and explain the connection to existing notions.

As above, let $A_n = \bigoplus_i A_n^i$ be a homogeneous C*-algebra, which we view as the C*-algebras of continuous sections of a locally trivial C*-bundle $\cA_n^i$ over $Z_n^i$ with fibre $M_{r_n^i}$. Set $Z_n \defeq \coprod_i Z_n^i$ and define $r(x) \defeq r_n^i$ for $x \in Z_n^i$.

Let $\varphi: \: A_n \to A_{n+1}$ be a unital homomorphism. We know that for every $y \in Z_{n+1}$, $\ev_y \circ \varphi \sim_u \bigoplus_{\mu \in M_y} \ev_{x_{\mu}}$, where $M_y$ is an index set and $x_{\mu}$ are some points in $Z_n$. This is the case because, up to unitary equivalence, all irreducible representations of $A_n$ are of the form $\ev_x$ for some $x \in Z_n$.
\bdefin
\label{def:MaxHom}
We call $\varphi$ maximally homogeneous if for every $y \in Z_{n+1}$, the points $x_{\mu}$, $\mu \in M_y$, are pairwise distinct.
\edefin
\nopar

Note that because $\varphi$ is unital, we must have $r(y) = \sum_{\mu \in M_y} r(x_{\mu})$.
\pars

\bremark
This definition coincides with the original one from \cite{Tho_JOT92} (see also \cite{Raa_PhD}) up to restricting to direct summands. Let us explain this:
\pari

Suppose that $A_n = \bigoplus_i C(Z_n^i) \otimes M_{r_n^i}$ and $A_{n+1} = \bigoplus_j C(Z_{n+1}^j) \otimes M_{r_{n+1}^j}$, where $Z_n^i$ and $Z_{n+1}^j$ are connected. Let $e_n^i$ be a rank one projection in $M_{r_n^i}$. Then $\rk \varphi(1 \otimes e_n^i)(y)$ is constant on $Z_{n+1}^j$; let $m_i^j$ be this value. Then we know that $\ev_y \circ \varphi \sim_u \bigoplus_i \bigoplus_{\mu \in M_y^i} \ev_{x_{\mu}}$, where $M_y^i$ is an index set with $\# M_y^i = m_i^j$ and $x_{\mu}$ are some points in $Z_n$. Then
$$
 \dim(\ev_y \circ \varphi(A_n)) = \sum_i (\# \menge{x_{\mu}}{\mu \in M_y^i}) \cdot (r_n^i)^2 \leq \sum_i m_i^j (r_n^i)^2,
$$
and equality holds if and only if $\# \menge{x_{\mu}}{\mu \in M_y^i}) = m_i^j$ for all $i$, i.e., the $x_{\mu}$ (for all possible $\mu$ and $i$) are pairwise distinct.
\eremark
\pars

The following clarifies the behaviour of maximally homogeneous maps with regard to C*-diagonals (in our setting of homogeneous C*-algebras, all Cartan subalgebras are C*-diagonals by Proposition~\ref{prop:Cartan=Diag}).
\nopar

\bprop
\label{prop:MaxHomDiag}
Let $B_n \subseteq A_n$ be a C*-diagonal and $\varphi: \: A_n \to A_{n+1}$ be a maximally homogeneous homomorphism. Then the following hold:
\begin{enumerate}
\item[(i)] There exists a unique C*-diagonal $B_{n+1}$ in $A_{n+1}$ such that $\varphi(B_n) \subseteq B_{n+1}$, and it is given by $B_{n+1} \defeq C^*(C(Z_{n+1}), \varphi(B_n))$.
\item[(ii)] $\varphi(N_{A_n}(B_n)) \subseteq N_{A_{n+1}}(B_{n+1})$, where $N_{\bullet}(\cdot)$ stands for the set of normalizers.
\item[(iii)] $P_{n+1} \circ \varphi = \varphi \circ P_n$, where $P_n$ and $P_{n+1}$ are the conditional expectations $A_n \onto B_n$ and $A_{n+1} \onto B_{n+1}$.
\end{enumerate}
\eprop
\bproof
(i) Given $y \in Z_{n+1}$, let $M_y$ and $x_{\mu}$, $\mu \in M_y$, be as before. For $d = 1, \dotsc, r(x_{\mu})$, let $b_{\mu}^d \in B_n$ be such that $b_{\mu}^d(x_{\mu})$, $d = 1, \dotsc, r(x_{\mu})$, are pairwise orthogonal projections (necessarily of rank one) and $b_{\nu}^d(x_{\mu}) = 0$ if $\nu \neq \mu$. Enumerate the $b_{\mu}^d$, i.e., write $\gekl{b_l} = \gekl{b_{\mu}^d}_{\mu,d}$. It then follows that $\varphi(b_l)(y)$ are $r(y)$ pairwise orthogonal projections (necessarily of rank one). Thus $B_{n+1}(y) \cong D_{r(y)}$, where $D_r$ is the canonical C*-diagonal in $M_r$. Moreover, we can choose $b_{\mu}^d \in B_n$ such that $b_{\mu}^d(x_{\mu})$, $d = 1, \dotsc, r(x_{\mu})$, are pairwise orthogonal projections and $b_{\nu}^d(x_{\mu}) = 0$ if $\nu \neq \mu$ for all $y'$ in a small neighbourhood around $y$. So $B_{n+1} \vert_{Z_{n+1}^j}$ is the C*-algebra of continuous sections of a locally trivial sub-C*-bundle of $\cA_{n+1}^j$ over $Z_{n+1}^j$ with fibre $D_{r_{n+1}^j}$. Hence $B_{n+1}$ is a C*-diagonal by \cite[\S~2]{LR}.
\pars

(ii) This now follows because given $a \in N_{A_n}(B_n)$, $\varphi(a)$ normalizes $\varphi(B_n)$ by definition and commutes with $C(Z_{n+1})$.

(iii) For $y \in Z_{n+1}$, let $\gekl{b_l}$ be as in the proof of (i) and let $\mu_l \in M_y$ be the unique index such that $b_l(x_{\mu_l}) \neq 0$. Then, for $a \in A_n$, 
$$
 P_n(a)(x_{\mu}) = \sum_{l, \, \mu_l = \mu} b_l(x_{\mu}) a(x_{\mu}) b_l(x_{\mu}) = \sum_l b_l(x_{\mu}) a(x_{\mu}) b_l(x_{\mu}).
$$
We conclude that
\begin{align*}
 P_{n+1}(\varphi(a))(y) &= \sum_l \varphi(b_l)(y) \varphi(a)(y) \varphi(b_l)(y) \, \sim_u \, \sum_l (b_l(x_{\mu}) a(x_{\mu}) b_l(x_{\mu}))_{\mu}\\
 &= \big( \sum_l b_l(x_{\mu}) a(x_{\mu}) b_l(x_{\mu}) \big)_{\mu} = (P_n(a)(x_{\mu}))_{\mu} \, \sim_u \, \varphi(P_n(a))(y).
\end{align*}
Here $\sim_u$ refers to a fixed unitary equivalence $\ev_y \circ \varphi \sim_u \bigoplus_{\mu} \ev_{x_{\mu}}$.
\eproof
\pars

\bcor
\label{cor:DiagMaxHom}
Suppose that $A_n$ are homogeneous C*-algebras as above and $\varphi_n: \: A_n \to A_{n+1}$ are unital, injective and maximally homogeneous homomorphisms. If $A_1$ has a C*-diagonal, then $A = \ilim_n \gekl{A_n, \varphi_n}$ has a C*-diagonal.
\ecor
\nopar

\bproof
This follows from Proposition~\ref{prop:MaxHomDiag} and Theorem~\ref{thm:LimCartan}.
\eproof
\pars

\btheo
\label{thm:dim=1}
Suppose that $A = \ilim_n \gekl{A_n,\phi_n}$, where $A_n = \bigoplus_i C(Z_n^i) \otimes M_{r_n^i}$ and $Z_n^i$ are non-degenerate $1$-dimensional connected CW-complexes (i.e., non-degenerate finite graphs), and $\phi_n$ are unital, injective homomorphisms. Then there exist unital, injective and maximally homogeneous homomorphisms $\varphi_n: \: A_n \to A_{n+1}$ such that we still have $A \cong \ilim_n \gekl{A_n, \varphi_n}$.
\etheo
\nopar

\begin{proof}
To simplify notation slightly, consider an injective and unital homomorphism $$\phi: \bigoplus\limits_{j=1}^N C(Z_j) \otimes M_{n_j} \rightarrow \bigoplus\limits_{i=1}^M C(W_i) \otimes M_{m_i},$$ where the $Z_j$'s and $W_i$'s are 1-dimensional connected CW-complexes. For each $i \in \{1,\ldots,M\}$  we associate to $W_i$ a finite connected tree $\mathcal{D}_i$ and a surjective map $p_i: \mathcal{D}_i \rightarrow W_i$ inducing an injective homomorphism $\iota_i:  C(W_i) \otimes M_{m_i} \rightarrow  C(\mathcal{D}_i) \otimes M_{m_i}, f \to f\circ p_i$. One may obtain $\mathcal{D}_i$ by taking a suitable connected compact subset of the universal covering tree associated to $W_i$. 
\pars

Throughout the rest of the proof, fix $j \in \{1,\ldots,N\}$. Identify the edges of $Z_j$ with $[0,1]$. Let $0 < \delta < \frac{1}{2}$ to be chosen, with the property that 
\begin{equation}\label{greater than delta}
  \delta < (\sum\limits_{i=1}^M m_i)^{-1}.  
\end{equation} Divide every edge of $Z_j$ into finitely many equal-sized strips of length at most $\delta$. For a strip identified with $[a,b]$ consider the function supported on this strip with value $\frac{2(x-a)}{b-a}$ for $x \in [a,\frac{a+b}{2}]$, and with value $\frac{2(b-x)}{b-a}$ for $x \in [\frac{a+b}{2},b]$. For adjacent strips $[a, b]$, $[b, c]$ (where we also consider strips that share a vertex as a boundary point as adjacent), let $m_1$ be the midpoint of $[a,b]$ and $m_2$ the midpoint of $[b,c]$. Define a function supported on $[m_1,b] \cup [b, m_2]$ taking value $\frac{(x-m_1)}{b-m_1}$ for $x \in [m_1,b]$, and taking value $\frac{m_2-x}{m_2-b}$ for $x \in [b, m_2].$ Let $H_j$ be a finite subset of central elements in $C(Z_j) \otimes M_{n_j}$ consisting of all such functions on all our strips. For any finite set $F_j$ containing the finitely many generators of $C(Z_j) \otimes M_{n_j}$, we let $G_j=F_j \cup H_j$. For any $z \in Z_j$ there exists an element $h \otimes 1 \in H_j$ with $h(z) \ge \frac{1}{2}$.

For $i \in \{1, \ldots, M\}$, let $P_i= \Pi_i \circ \phi \circ \eta_j(1 \otimes 1_{n_j})$, where $\eta_j$ denotes the canonical inclusion of the $j^{\text{th}}$ summand into a direct sum, and $\Pi_i$ denotes the canonical projection of the direct sum onto its $i^{\text{th}}$ summand. Assume $P_i$ has trace $K_i$ at every evaluation. Let
\begin{equation*}
    \phi_i: C(Z_j) \otimes M_{n_j} \rightarrow P_i(C(W_i) \otimes M_{m_i})P_i, \; \; f \to \Pi_i(\phi(\eta_j(f))),
\end{equation*}
\begin{equation*}
    \mathrm{Ad}(\nu_i): P_i(C(W_i) \otimes M_{m_i})P_i \xrightarrow{\cong} C(W_i) \otimes M_{K_i},
\end{equation*}
where $\mathrm{Ad}(\nu_i)$ is obtained by noting that $[P_i]_0=[1 \otimes 1_{K_i}]_0$ in the $K_0$ group and that $C(Z_j)$ has cancellation as it has stable rank one (see \cite[\S~1.1]{Ror}), implying that $P_i \sim 1 \otimes 1_{K_i}$. Apply Corollary 2.1.8 in \cite{Li97} to obtain a maximally homogeneous homomorphism 
\begin{equation*}
    \chi_i:  C(Z_j) \otimes M_{n_j} \rightarrow C(W_i) \otimes M_{K_i} 
\end{equation*}such that 
\begin{equation}\label{delta 2}
    \norm{\chi_i(a)-\mathrm{Ad}(\nu_i) \circ \phi_i (a)} < \delta
\end{equation} for all $a \in G_j$. 

By Theorem 3 in \cite{klaus4} we may write
\begin{equation}\label{diag form 3}
   \iota_i \circ \chi_i(f)(t)=u_i(t)(\sum\limits_{s=1}^{\frac{K_i}{n_j}} f(\lambda^i_s(t)) \otimes q_s)u_i(t)^*
\end{equation} for all $f \in C(Z_j) \otimes M_{n_j}$ and $t \in \mathcal{D}_i$, and where $u_i$ is a unitary in $C(\mathcal{D}_i) \otimes M_{K_i}$ and where the $q_s$ are the canonical minimal projections of $M_{\frac{K_i}{n_j}}$ when identifying $M_{K_i}$ with $M_{n_j} \otimes M_{\frac{K_i}{n_j}}$. Here the functions $\lambda^i_1, \ldots, \lambda^i_{\frac{K_i}{n_j}} : \mathcal{D}_i \rightarrow Z_j$ are called eigenvalue functions. Note that the homomorphism 
\begin{equation*}
    \alpha : C(Z_j) \otimes M_{n_j} \rightarrow \bigoplus\limits_{i=1}^M C(\mathcal{D}_i) \otimes M_{K_i}, \; \; f \to (\iota_i \circ \mathrm{Ad}(\nu_i)\circ \phi_i (f))_i
\end{equation*}
is injective because $\phi$ is. Using this we conclude that given any $z \in Z_j$, and $h\in H_j$ such that $h(z) \ge \frac{1}{2}$, we have that 
\begin{equation*}
    h(z) \in \mathrm{sp}_{C(Z_j) \otimes M_{n_j}}(h \otimes 1)=\mathrm{sp}_{\bigoplus\limits_{i=1}^M C(\mathcal{D}_i) \otimes M_{K_i}}(\alpha(h \otimes 1))
\end{equation*} and so there must exist some $i \in \{1,\ldots,M\}$ such that \begin{equation*}
    h(z) \in \mathrm{sp}_{C(\mathcal{D}_{i}) \otimes M_{m_{i}}}(\Pi_{i}(\alpha(h \otimes 1)))
\end{equation*}
from which it follows that there exists some $t \in \mathcal{D}_{i}$ such that 
\begin{equation*}
  h(z) \in  \mathrm{sp}_{M_{m_{i}}}(\iota_{i} \circ \mathrm{Ad}(\nu_{i}) \circ \phi_{i}(h\otimes 1)(t))
\end{equation*}
From \eqref{delta 2} it follows that 
\begin{equation*}
   \norm{\iota_{i}\circ \chi_{i}(h \otimes 1)(t) -  \iota_{i}\circ \mathrm{Ad}(\nu_{i}) \circ \phi_{i}(h \otimes 1)(t)} < \delta
\end{equation*}
from which it follows by Lemma 1.1 in \cite{klaus2} and \eqref{diag form 3} that there exists some $s \in \{1,\ldots,\frac{K_i}{n_j}\}$ such that $$\abs{h(\lambda^i_s(t))-h(z)} \le \norm{\chi_i(h \otimes 1) - \mathrm{Ad}(\nu_i) \circ \phi_i(h \otimes 1)} < \delta < \frac{1}{2}.$$ From this it follows that $h(\lambda^i_s(t)) \neq 0$ and so both $\lambda^i_s(t)$ and $z$ belong to the support of $h$, meaning that $\lambda^i_s(t)$ is distance at most $\delta$ away from $z$. This argument has shown that points in $Z_j \setminus \bigcup\limits_{i=1}^M\bigcup\limits_{s=1}^{\frac{K_i}{n_j}} \lambda^i_s(\mathcal{D}_i)$ are at most a distance $\delta$ from $\bigcup\limits_{i=1}^M\bigcup\limits_{s=1}^{\frac{K_i}{n_j}} \lambda^i_s(\mathcal{D}_i)$. 

We may assume $Z_j \setminus \bigcup\limits_{i=1}^M\bigcup\limits_{s=1}^{\frac{K_i}{n_j}} \lambda^i_s(\mathcal{D}_i)$ is a union of finitely many pairwise disjoint connected open sets which, by the previous arguments, can be made arbitrarily small depending on $\delta$. That the union is finite follows from the fact that $\bigcup\limits_{i=1}^M\bigcup\limits_{s=1}^{\frac{K_i}{n_j}} \lambda^i_s(\mathcal{D}_i)$ contains finitely many connected components, as each $\lambda^i_s(\mathcal{D}_i)$ is connected. Each such connected open set is then homeomorphic to an open interval or an open asterisk (but in the latter case this can also be written as a union of pairwise disjoint connected open sets each homeomorphic to an open interval). Hence we identify each open set with $(a_r,b_r)$ where $a_r$ and $b_r$ are identified with points in $Z_j$ (not necessarily belonging to the same edge). Hence we identify $Z_j \setminus \bigcup\limits_{i=1}^M\bigcup\limits_{s=1}^{\frac{K_i}{n_j}} \lambda^i_s(\mathcal{D}_i)$ with the disjoint union $\bigcupdot\limits_{r=1}^R (a_r,b_r)$ for some $R \in \mathbb{N}$.
 
 Let $\mathcal{U}=\{(a_r,b_r) : 1 \le r \le R\}.$ We will call $\mathcal{C} \subset \mathcal{U}$ a \emph{chain} if the closure of the union of its elements is connected. We will call $\mathcal{M} \subset \mathcal{U}$ a \emph{maximal chain} if it is a chain and is not contained in any bigger chain. Given any maximal chain $\mathcal{M}$, relabel its elements as $\mathcal{M}=\{(a_1,b_1), \ldots, (a_T, b_T)\}$. Note that the closure of the union of the elements of $\mathcal{M}$, call this $C$, is contractible. Indeed if it were not, $C$ would have to contain an edge $e$ of $Z_j$, meaning that $T$ must be greater than $\delta^{-1}$, which is greater than $\sum\limits_{i=1}^Mm_i$ by \eqref{greater than delta}. Hence there will be more than $\sum\limits_{i=1}^Mm_i$ isolated points in $Z_j$ which are the constant images of eigenvalue functions, but we do not have more than $\sum\limits_{i=1}^Mm_i$ eigenvalue functions for this to work, yielding a contradiction. Hence $C$ is homeomorphic to either a line or an asterisk.
 
 Let us focus on the first case when $C$ is homeomorphic to a line. We may without loss of generality assume that $b_1=a_2$, $b_2=a_3$, $\ldots$, $b_{T-1}=a_T.$ As a first sub-case assume all the $a_r$'s and $b_r$'s are not vertex points in $Z_j$ (up to identification). There exists some $i \in \{1,\ldots,M\}$ and $s \in \{1,\ldots,\frac{K_i}{n_j}\}$ and an eigenvalue function $\lambda^i_s$ whose image contains $a_1$, and some $t \in \mathcal{D}_i$ such that $\lambda^i_s(t) \in (a_1-\rho,a_1]$ for some $\rho >0$ which can be made arbitrarily small, depending on $\delta.$ By continuity of the eigenvalue functions, we may without loss of generality assume that $t$ is not a vertex point in $\mathcal{D}_i$. After identifying edges of $\mathcal{D}_i$ with $[0,1]$, we may choose an open interval $U$ inside an edge of $\mathcal{D}_i$, containing $t$, small enough such that $\lambda^i_s\vert_U$ still has image in $(a_1-\rho,a_1]$, and such that the image of $\lambda^i_s\vert_U$ does not meet the image of any other $\lambda^i_{s^\prime}\vert_U$. The latter assumption can be made because maximal homogeneity of \eqref{diag form 3} implies that the elements of $\{\lambda^i_s(t) : 1 \le s \le \frac{K_i}{n_j}\}$ are pairwise distinct, and so by continuity the images of these eigenvalue functions when restricted to a small open set around $t$ will still not meet. For every $t_k \in p_i^{-1}(p_i(t))$, it follows from \eqref{diag form 3} that $$\{f\circ \lambda^i_1(t_k), \ldots, f \circ \lambda^i_\frac{K_i}{n_j}(t_k)\}=\{f\circ \lambda^i_1(t), \ldots, f \circ \lambda^i_\frac{K_i}{n_j}(t)\},$$ for all $f \in C(Z_j) \otimes M_{n_j}$. Since $C(Z_j)$ separates points it follows that \begin{equation}\label{above}
     \{ \lambda^i_1(t_k), \ldots,  \lambda^i_\frac{K_i}{n_j}(t_k)\}=\{ \lambda^i_1(t), \ldots, \lambda^i_\frac{K_i}{n_j}(t)\},
 \end{equation} and so there exists a permutation $\mu_k \in \Sigma_{\frac{K_i}{n_j}}$ such that 
 \begin{equation}\label{permutation}
     \lambda^i_s(t)=\lambda^i_{\mu_k(s)}(t_k)
 \end{equation} for all $s \in \{1,\ldots, \frac{K_i}{n_j}\}$. Hence for each $k$ there exists an open interval $V_k$ around $t_k$ and a homeomorphism $h_k: V_k \rightarrow U_k$ where $U_k$ is some open interval around $t$, with $h_k(t_k)=t$ and such that 
 \begin{equation*}
     \lambda^i_s\vert_{U_k}=\lambda^i_{\mu_k(s)}\circ h_k^{-1}\vert_{U_k}
 \end{equation*} for all $s \in \{1,\ldots, \frac{K_i}{n_j}\}$. Indeed, since the elements in \eqref{above} are pairwise distinct, points in a small neighbourhood around $t$ will correspond to points in a small neighbourhood around $t_k$, where by a correspondence we mean that they have the same image under $p_i$. By choosing small enough neighbourhoods and using continuity of the eigenvalue functions we obtain our claim. 
 
 By taking an intersection of all the $U_k$'s (which are finitely many as $\mathcal{D}_i$ is finite), and assuming $U$ is a subset of this intersection, we obtain open sets $V_k$ around $t_k$ and homeomorphisms $h_k: V_k \rightarrow U$ with $h_k(t_k)=t$ and such that 
 \begin{equation}\label{homeo eigen}
     \lambda^i_s\vert_{U}=\lambda^i_{\mu_k(s)}\circ h_k^{-1}\vert_{U}.
 \end{equation} Perturb $\lambda^i_s$ on $U$ to a map $\omega^i_s$ which contains the image of $\lambda^i_s$ but also $[a_1,b_1]$. For any connected closed set $V$ inside $U$ and any point $t^\prime$ in $V$ we may further assume that $\omega^i_s$ takes value $b_1$ only at the point $t^\prime$, agreeing with $\lambda^i_s$ outside $V$ and being greater than or equal to $\lambda^i_s$ on $V$ (where we are the using the canonical order on intervals). Use \eqref{homeo eigen} to perturb $\lambda^i_{\mu_k(s)}$ correspondingly, so that it achieves value $b_1$ at the unique point $h_k^{-1}(t^\prime)$ inside a closed subset of $V_k$, for all $k$. 
 
Now since $b_1=a_2 \not\in Z_j \setminus \bigcup\limits_{i=1}^M\bigcup\limits_{s=1}^{\frac{K_i}{n_j}} \lambda^i_s(\mathcal{D}_i)$ there exists $\overline{i} \in \{1,\ldots,M\}$ and $\overline{s} \in \{1,\ldots, \frac{K_i}{n_j}\}$ and an eigenvalue function $\lambda^{\overline{i}}_{\overline{s}}$ achieving the value $b_1=a_2$. In fact this eigenvalue function must be constant, as it cannot take any value in $(a_2,b_2)$, and by assumption $a_2$ is not a vertex point. Exactly as before, perturb this to a function $\omega^{\overline{i}}_{\overline{s}}$ with image $[a_2,b_2]$ and we may assume that the value of $b_2$ is achieved only at $t^\prime$ if $\overline{i}=i$. Also perturb the corresponding eigenvalue functions (as in the sense of \eqref{homeo eigen}). Repeat this procedure for the points $a_3,a_4,\ldots,a_T$. This concludes the first sub-case.
 
 For the second sub-case, assume some $a_t=b_{t-1}$ is a vertex in $Z_j$, for some $t \in \{2,\ldots,T\}$. In such a case, divide the maximal chain $\mathcal{M}$ into two chains $\{(a_1,b_1), \ldots, (a_{t-1},b_{t-1})\}$ and $\{(a_t,b_t), \ldots, (a_{T},b_{T})\}$, and perform the same procedure of the first sub-case for each chain, making sure that the eigenvalue functions perturbed to achieve the point $a_t$ do so at different points in $\bigsqcup\limits_i \mathcal{D}_i$.
 
 Let us now turn to the second case, where $C$ is homeomorphic to an asterisk, with vertex $v$. As a first sub-case, we assume that $v \in (a_r,b_r)$ for some $r \in \{1,\ldots,T\}$. Divide the maximal chain into chains, one containing $(a_r,b_r)$, and the others contained in edges coming out of $v$. For these ones, we may perturb as in the second sub-case of the first case, making sure the eigenvalue functions achieving the value $v$ do so at different points. For the chain containing $(a_r,b_r)$, we perturb as in the first sub-case of the first case, making sure that the point at which $v$ is achieved differs from where it is achieved by the perturbed eigenvalue functions on the remaining chains. 
 
 As a second sub-case, we assume that $v$ is not contained in any of the $(a_r,b_r)$'s. From this it follows that $v \in \bigcup\limits_{i=1}^M\bigcup\limits_{s=1}^{\frac{K_i}{n_j}} \lambda^i_s(\mathcal{D}_i)$, and so there is some eigenvalue function $\lambda$ achieving the value $v$. Break up the maximal chain into sub-chains each contained in a unique edge coming out of $v$. For all but one of these sub-chains, use the second sub-case of the first case to perturb the eigenvalue functions, making sure that where $v$ is achieved is at distinct points. Now perturb $\lambda$ to cover one of the elements of the final remaining sub-chain, done in a way such that the value $v$ is not achieved on any point in which the previous perturbations achieve this value.
 
 The procedure we have described ensures that the perturbed eigenvalue functions do not agree pointwise at points where they achieve the values of $C \setminus \bigcup\limits_{m_t \in \mathcal{M}} m_t$. However, there may be a finite number of points, in the boundary of $C$, where the perturbed eigenvalue functions agree pointwise with some non-perturbed eigenvalue functions. In such a case it is straightforward to perturb the originally non-perturbed eigenvalue functions to achieve these problematic points at a slightly different value in $\bigsqcup\limits_i \mathcal{D}_i$ than their original. Since the number of such problematic points are finite, this is achievable.
 
 Let $\{\omega^i_s : 1 \le i \le M, 1 \le s \le \frac{K_i}{n_j} \}$ denote the set of perturbed eigenvalue functions (where we set $\omega^i_s= \lambda^i_s$ if $\lambda^i_s$ was not perturbed). We have the following list of properties of our perturbed eigenvalue functions:
\nopar

 \begin{enumerate}[label=(\alph*)]
     \item\label{oneone} The union $\bigcup\limits_{i=1}^M\bigcup\limits_{s=1}^{\frac{K_i}{n_j}} \omega^i_s(\mathcal{D}_i)$ equals $Z_j$. This is because the images of the perturbed functions cover all the elements of $\mathcal{U}$ as well as the images of the original unperturbed eigenvalue functions.
     \item\label{twotwo} We have that $\norm{\omega^i_s(t)-\lambda^i_s(t)}$ can be made as small as we like (depending on $\delta$), uniformly over $t \in \mathcal{D}_i$. This is because when perturbing an eigenvalue function to cover $(a_r,b_r) \in \mathcal{U}$ the maximal difference between the perturbed function and the original is $b_r-(a_r-\rho)$ where $\rho$ was chosen small enough depending on $\delta$, and $b_r-a_r$ is smaller than $\delta$ by construction.
     \item\label{threethree} For each $i \in \{1,\ldots,M\}$, the elements of $\{\omega^i_s(t) : 1 \le s\le \frac{K_i}{n_j}\}$ are pairwise distinct. This is due to the final argument made above. 
     \item\label{fourfour} For $t \in \mathcal{D}_i$ on which $\lambda^i_s$ witnessed a perturbation, and for any $t_k \in \mathcal{D}_i$ such that $p_i(t_k)=p_i(t)$, we have that $\omega^i_s(t)=\omega^i_{\mu_k(s)}(t_k)$ where the permutations $\mu_k$ were introduced in \eqref{permutation}. This is due to \eqref{homeo eigen} and the arguments following it. 
 \end{enumerate}
\pars
 
 For $i \in \{1,\ldots,M\},$ define 
\begin{equation*}
    \overline{\gamma}_i: C(Z_j) \otimes M_{n_j} \rightarrow C(\mathcal{D}_i) \otimes M_{K_i}, \;\; \overline{\gamma}_i(f)(t)=u_i(t)(\sum\limits_{s=1}^{\frac{K_i}{n_j}} f(\omega^i_s(t)) \otimes q_s)u_i(t)^*
\end{equation*}
for all $t \in \mathcal{D}_i$, and where the $u_i$'s and $q_s$'s are as in \eqref{diag form 3}. Define
\begin{equation*}
    \overline{\gamma}: C(Z_j) \otimes M_{n_j} \rightarrow \bigoplus\limits_{i=1}^M C(\mathcal{D}_i) \otimes M_{K_i}, \; \; f \to (\overline{\gamma}_1(f),\ldots, \overline{\gamma}_M(f))
\end{equation*}
Now define \begin{equation*}
    \gamma_i: C(Z_j) \otimes M_{n_j} \rightarrow C(W_i) \otimes M_{K_i}, \; \; \gamma_i(f)(w) = \overline{\gamma}_i(f)(t_w), \;\; t_w \in p_i^{-1}(w).
\end{equation*}
To see that this is well-defined let $t_1$ and $t_2$ belong to $\mathcal{D}_i$ such that $p_i(t_1)=p_i(t_2)$. Note then that, as in \eqref{above} and \eqref{permutation} above, we have that there exists a permutation $\mu$ such that $\lambda^i_s(t_1)=\lambda^i_{\mu(s)}(t_2)$ for all $s \in \{1,\ldots, \frac{K_i}{n_j}\}$. Fix an $s_0 \in \{1,\ldots, \frac{K_i}{n_j}\}$. Then choose a function $f$ in \eqref{diag form 3} which is 1 on $\lambda^i_{s_0}(t_1)$ and 0 on all the other $\lambda^i_s(t_1)$'s. By using \eqref{diag form 3} and evaluating on $t_1$ and $t_2$ (which yields the same result) we obtain that
\begin{equation}\label{c1}
    u_i(t_1)(1 \otimes q_{s_0})u_i(t_1)^*=u_i(t_2)(1 \otimes q_{\mu(s_0)})u_i(t_2)^*.
\end{equation}
By choosing a function $f$ in \eqref{diag form 3} which takes constant value $c \in M_{n_j}$, and evaluating \eqref{diag form 3} at $t_1$ and $t_2$ we obtain that 
\begin{equation}\label{c2}
     u_i(t_1)(c \otimes 1)u_i(t_1)^*=u_i(t_2)(c \otimes 1)u_i(t_2)^*.
\end{equation}
This implies that for all $f\in C(Z_j) \otimes M_{n_j}$ we have
\begin{equation*}
    \begin{split}
        \overline{\gamma}_i(f)(t_1) &= u_i(t_1)(\sum\limits_{s=1}^{\frac{K_i}{n_j}} f(\omega^i_s(t_1)) \otimes q_s)u_i(t_1)^* \\ &= \sum\limits_{s=1}^{\frac{K_i}{n_j}}(u_i(t_1)(f(\omega^i_s(t_1)) \otimes 1)u_i(t_1)^*)(u_i(t_1)(1 \otimes q_s)u_i(t_1)^*) \\ & = \sum\limits_{s=1}^{\frac{K_i}{n_j}}(u_i(t_2)(f(\omega^i_s(t_1)) \otimes 1)u_i(t_2)^*)(u_i(t_2)(1 \otimes q_{\mu(s)})u_i(t_2)^*) \\ &= u_i(t_2)(\sum\limits_{s=1}^{\frac{K_i}{n_j}}f(\omega^i_s(t_1)) \otimes q_{\mu(s)})u_i(t_2)^* \\ & = u_i(t_2)(\sum\limits_{s=1}^{\frac{K_i}{n_j}}f(\omega^i_{\mu(s)}(t_2)) \otimes q_{\mu(s)})u_i(t_2)^* \\ & =  \overline{\gamma}_i(f)(t_2),
    \end{split}
\end{equation*}
where the third equality is due to \eqref{c1} and \eqref{c2}, and the penultimate equality is due to property \ref{fourfour} above. Hence $\gamma_i$ is well-defined. 

Now define
\begin{equation*}
    \gamma: C(Z_j) \otimes M_{n_j} \rightarrow \bigoplus\limits_{i=1}^M C(W_i) \otimes M_{K_i}, \; \; f \to (\gamma_1(f),\ldots,\gamma_M(f)).
\end{equation*}
Define 
\begin{equation*}
    \varphi_j: C(Z_j) \otimes M_{n_j} \rightarrow \bigoplus\limits_{i=1}^M C(W_i) \otimes M_{m_i}, \; \; f \to (\mathrm{Ad}(\nu_i)^{-1}\circ \gamma_i(f))_i,
\end{equation*}
where now we are viewing $P_i(C(W_i) \otimes M_{m_i})P_i$ as sitting inside $C(W_i) \otimes M_{m_i}$. After repeating the above proof to all other $j$'s, we may finally define
\begin{equation*}
    \varphi: \bigoplus\limits_{j=1}^N C(Z_j) \otimes M_{n_j} \rightarrow \bigoplus\limits_{i=1}^M C(W_i) \otimes M_{m_i}, \; \; (f_1,\ldots,f_j) \to \sum\limits_{j=1}^N \varphi_j(f_j).
\end{equation*}
It is straightforward to see that $\varphi$ is a unital homomorphism. If $\varphi(f_1,\ldots,f_N)=0$ then $\varphi_j(f_j)=0$ for all $j$. Fixing $j$ as before, this means that $\gamma_i(f_j)=0$ for all $i$,  which means that $\overline{\gamma}_i(f_j)=0$ for all $i$, meaning that $f_j(\omega^i_s(t))=0$ for all $i$ and $s$, which means that $f_j=0$ by property \ref{oneone}. Hence $\varphi$ is injective. Showing that $\varphi$ is maximally homogeneous reduces to showing that $\overline{\gamma}_i$ is maximally homogeneous, which follows by \ref{threethree}. Let $\mathcal{F} = F_1 \times F_2 \times \ldots \times F_N \subset  G_1 \times G_2 \times \ldots \times G_N \subset \bigoplus\limits_{j=1}^N C(Z_j) \otimes M_{n_j}$. For $a \in \mathcal{F}$, we have that $\norm{\phi(a)-\varphi(a)}$ is smaller than 
\begin{equation*}
\begin{split}
   & N\max\limits_{j,a_j \in G_j}\norm{\phi(\eta_j(a_j))-\varphi_j(a_j)} \le N \max\limits_i\max\limits_{j,a_j \in G_j}\norm{\phi_i(a_j)-\mathrm{Ad}(\nu_i)^{-1}(\gamma_i(a_j))} \\ = \ & N \max\limits_i\max\limits_{j,a_j \in G_j}\norm{\mathrm{Ad}(\nu_i)(\phi_i(a_j))-\gamma_i(a_j)} \\ \le \ & N \max\limits_i\max\limits_{j,a_j \in G_j}(\norm{\mathrm{Ad}(\nu_i)(\phi_i(a_j))-\chi_i(a_j)} + \norm{\chi_i(a_j)-\gamma_i(a_j)} ) \xrightarrow{\delta\to 0} 0
\end{split}
\end{equation*}
The final estimate is due to \eqref{delta 2} and property \ref{twotwo}. 

Recall the notation in the statement of the theorem. We have now shown that we may get from $\phi_n$ an injective and maximally homogeneous unital homomorphism $\varphi_n$. Furthermore, assuming $\mathcal{F}_n$ is a finite set generating $A_n$ such that $\mathcal{F}_n$ contains $$\big( \bigcup\limits_{k=1}^{n-1} \phi_{n,k}(\mathcal{F}_k) \big) \cup \big( \bigcup\limits_{k=1}^{n-1} \psi_{n,k}(\mathcal{F}_k) \big)$$ (with $\phi_{n,k}$ and $\psi_{n,k}$ as in Lemma \ref{lemma: sufficient iso limit}) we have shown that we may assume $\norm{\phi_n(a)-\varphi_n(a)} < 2^{-n}$ for all $a \in \mathcal{F}_n$. Then by using Lemma \ref{lemma: sufficient iso limit}, we obtain that $A \cong \ilim_n \gekl{A_n, \varphi_n}$ as desired.
 \end{proof}
\pars

\btheo
\label{thm:dim<=1}
Suppose that $A = \ilim_n \gekl{A_n,\phi_n}$ is a unital AH-algebra, where $A_n = \bigoplus_i C(Z_n^i) \otimes M_{r_n^i}$ and where each $Z_n^i$ is a Hausdorff continuum having covering dimension at most one. Then $A$ has a C*-diagonal. 
\etheo
\nopar

\begin{proof}
As explained in \cite[\S~3.1]{Ror}, we can write $A$ as another inductive limit $A \cong \ilim_n \gekl{\dot{A}_n,\dot{\phi}_n}$ such that $\dot{A}_n = \bigoplus_i C(\dot{Z}_n^i) \otimes M_{r_n^i}$,  where the $\dot{Z}_n^i$ are connected CW-complexes of covering dimension at most one. In dimension at most one, there is no difference between CW-complexes and simplicial complexes. Hence \cite[Theorem~2.1,~Remark~2.2]{EGL05} yields yet another description $A \cong \ilim_n \gekl{\ddot{A}_n,\ddot{\phi}_n}$, where $\ddot{A}_n = \bigoplus_i C(\ddot{Z}_n^i) \otimes M_{r_n^i}$ where the $\ddot{Z}_n^i$ are connected CW-complexes of covering dimension at most one, and where the connecting maps are now injective and unital.

Consider the first building block and first connecting map of this new system, and relabel it as $$\phi:\bigoplus_j C({Z}_j) \otimes M_{n_j}\rightarrow \bigoplus_i C({W}_i) \otimes M_{m_i}. $$
Let $C=\bigoplus_j C_j$ denote any C$^*$-diagonal of the domain. Let the restriction of $\phi$ from the $j^{\text{th}}$ summand of the domain to the $i^{\text{th}}$ summand of the codomain be labeled $\phi_{ij}$. Let $K_{ij}$ denote the rank of $\phi_{ij}(1)(w)$, for any $w \in W$ ($W$ is connected and so any choice will yield the same rank).  
\pars

Let $F^Z_0$ denote the finite set of those $j$ for which $Z_j$ is zero-dimensional, and $F^Z_1$ denote the finite set of those $j$ for which $Z_j$ is one-dimensional. Let $F^W_0$ denote the finite set of those $i$ for which $W_i$ is zero-dimensional, and $F^W_1$ denote the finite set of those $i$ for which $W_i$ is one-dimensional.  We have already seen in the proof of Theorem \ref{thm:dim=1} how we can consider a unital $*$-homomorphism $\overline{\phi_{ij}}: C(Z_j)\otimes M_{n_j} \rightarrow C(W_i)\otimes M_{n_jK_{ij}}$. In the case that $j\in F^Z_1$ and $i\in F^W_1$ we can then perturb $\overline{\phi_{ij}}$ to a maximally homogeneous connecting map $\gamma_{ij}$, with the property that if $\Gamma_{ij}$ denotes the subset of $Z_j$ which is the images of the eigenvalue functions associated to $\gamma_{ij}$, then $Z_j\setminus F_j=\bigcup\limits_{i\in F^W_1}\Gamma_{ij}$, where $F_j$ denotes the finite subset of $Z_j$ that is the images of the eigenvalue functions associated to $\phi_{ij}$ for all $i\in F^W_0$ (this is shown in the proof of Theorem \ref{thm:dim=1}). For $j\in F^Z_1$ and $i\in F^W_1$ we can then find by Proposition \ref{prop:MaxHomDiag} a unique C$^*$-diagonal $D_{ij}\subset C(W_i) \otimes M_{n_jK_{ij}}$ with the properties given in that proposition. We have that $\mathrm{dim}(D_{ij}(w))=n_jK_{ij}$ for all $w\in W_i$. Note that, as in the proof of Theorem \ref{thm:dim=1} we will view $C(W_i)\otimes M_{n_jK_{ij}}$ as sitting inside $C(W_i)\otimes M_{m_i}$, and hence we will abuse notation and view $D_{ij}$ as sitting inside $C(W_i)\otimes M_{m_i}.$ 

If $j\in F^Z_0$, then let $\{e^j_{kl}: k=1,\ldots,n_j\}$ be a set of matrix units in $M_{n_j}$ such that $C_j$ is diagonal with respect to these matrix units. Note that $\overline{\phi_{ij}}(e^j_{kk})$ is a projection corresponding to a vector bundle over $W_i$, but since complex vector bundles over finite graphs are trivial, we may write $\overline{\phi_{ij}}(e^j_{kk})$ as $\sum\limits_{s=1}^{K_{ij}}f^{i,j,k}_s$ where $\{f^{i,j,k}_s: s=1,\ldots,K_{ij}\}$ are orthogonal Murray-von-Neumann equivalent minimal projections in $C(W_i)\otimes M_{n_jK_{ij}}$. By the pairwise orthogonality of $\{e^j_{kk}: k=1,\ldots,n_j\}$ we get pairwise orthogonal minimal equivalent projections $\{f^{i,j,k}_s: k=1,\ldots,n_j; ~s=1,\ldots,K_{ij}\}$. Now declare $v^k_s$ to be the partial isometry implementing the equivalence between $f^{i,j,1}_1$ and $f^{i,j,k}_s$, so $(v^k_s)^*v^k_s=f^{i,j,1}_1$, $v^k_s(v^k_s)^*=f^{i,j,k}_s.$ Then declare $f^{kl}_{pq}=v^k_p(v^l_q)^*.$ Then it is easy to see that $\{f^{kl}_{pq}: 1\le k,l \le n_j; ~1\le p,q \le K_{ij}\}$ defines a system of matrix units in $C(W_i)\otimes M_{n_jK_{ij}}$ such that $f^{kl}_{pq}f^{{k^\prime} {l^\prime}}_{{p^\prime} {q^\prime}}=\delta_{l,k^\prime}\delta_{q,p^\prime}f^{k,l^\prime}_{p,q^\prime}$. Let $D_{ij}$ be the diagonal with respect to this system. Then $\mathrm{dim}(D_{ij}(w))=n_jK_{ij}$. Note also that by our construction of the matrix units above it becomes a straightforward calculation to see that $D_{ij}$ satisfies the conditions of Theorem \ref{thm:LimCartan} with respect to the map $\overline{\phi_{ij}}$, as normalizers of $M_{n_j}$ are those elements whose representation with respect to the matrix units $\{e^j_{kl}\}$ have at most one non-zero entry in every row or column, and this will be preserved by $\overline{\phi_{ij}}$. Similarly it is clear that $\overline{\phi_{ij}}$ commutes with the conditional expectations, as these are given by projection  down to the diagonal. 

If $j\in F^Z_1$ but $i\in F^W_0$ then note that the associated eigenvalue functions of $\overline{\phi_{ij}}$ have image in the finite set $F_j$. It is now easy to declare $\gamma_{ij}$ to be of the same form as $\overline{\phi_{ij}}$ but have eigenvalue functions all distinct but close to the original ones. Indeed let $G_j$ be a finite subset of $C(Z_j)\otimes M_{n_j}$ (which is to be chosen later). Whenever there are $k$ equal points $z\in (Z_j \cap F_j)$ we just perturb $k-1$ of them to be close to $z$ but such that $\norm{g(z)-g(*)}$ is made as small as we like for all $g \in G_j$ (here $*$ denotes a perturbed point). Hence we obtain a unital maximally homogeneous $*$-homomorphism $\gamma_{ij}$ and from Proposition \ref{prop:MaxHomDiag} a C$^*$-diagonal $D_{ij}$ in $C(W_i)\otimes M_{n_jK_{ij}}$. Again we view it as sitting inside $C(W_i) \otimes M_{m_i}$. 

Then, we declare $D_i$ to be $\sum\limits_j D_{ij} \cong \bigoplus\limits_j D_{ij}$ as the C$^*$-algebras $D_{ij}$ are orthogonal over $j$. Note that $D_i$ is a maximally homogeneous subalgebra of $C(W_i)\otimes M_{m_i}$ as $\mathrm{dim}(D_i(w))=\sum \limits_j n_jK_{ij}=m_i$ and hence a C$^*$-diagonal by \cite[Lemma~4.4.3]{Raa_PhD}. Then we declare $D$ to be $\bigoplus\limits_iD_i$ which is clearly a C$^*$-diagonal in $A_2$. 

The corresponding perturbed $*$-homomorphism is obtained by first treating $\gamma_{ij}$ as a map into $C(W_i)\otimes M_{m_i}$ as in the proof of Theorem \ref{thm:dim=1}, then $\gamma_i:=\sum\limits_j \gamma_{ij}$ and finally $\varphi=\bigoplus\limits_i \gamma_i$. The constructions above ensure that the eigenvalue functions associated to $\varphi$ have images covering all of $\bigsqcup\limits_j Z_j$ and hence $\varphi$ is injective. By construction $\varphi(C)\subset D$ with the properties required by Theorem \ref{thm:LimCartan}. We inductively repeat this procedure on the building blocks, and then by using Lemma \ref{lemma: sufficient iso limit} and choosing the finite sets $G_j$ above appropriately with regards to this lemma, we obtain that $A\cong\varinjlim\{A_n,\varphi_n\}$. The proof is finished by appealing to Theorem \ref{thm:LimCartan}.
\end{proof}
\pars

We record the following consequence of Theorem~\ref{thm:dim<=1} (or Theorem \ref{thm:dim=1}).
\bcor
\label{cor:AHbdddim}
Every unital AH-algebra with bounded dimension, the ideal property, and torsion-free $K$-theory has a C*-diagonal.
\ecor
\nopar

\bproof
By \cite{GJLP}, such an AH-algebra is an A$\Tz$-algebra. Now apply Theorem~\ref{thm:dim<=1}.
\eproof
\pars

\section{Cartan subalgebras in AH-algebras with generalized diagonal connecting maps}

\label{s:DiagConn}

We consider AH-algebras of the following form: For each $n$ and $i$, let $Z_n^i$ be a compact, connected, Hausdorff space. Let $r_n$ be some natural numbers. Let $p_n^i \in C(Z_n^i) \otimes M_{r_n}$ be projections. Set $A_n \defeq \bigoplus_i p_n^i ( C(Z_n^i) \otimes M_{r_n} ) p_n^i$. Let $Z_n \defeq \coprod_i Z_n^i$. Let $\cY(n)$ be an index set. For $y \in \cY(n)$, let $\lambda_y: \: Z_{n+1} \to Z_n$ be continuous maps. Let $q_y \in C(Z_{n+1}) \otimes M_{s_n}$, $y \in \cY(n)$, be pairwise orthogonal projections. 
\bdefin
\label{def:GenDiag}
A generalized diagonal homomorphism from $A_n$ to $A_{n+1}$ is of the form
\begin{equation}\label{general formX}
 \varphi_n: \: A_n \to A_{n+1}, \, a \mapsto \sum_{y \in \cY(n)} (a \circ \lambda_y) \otimes q_y.
\end{equation}
\edefin
\nopar

Here $(a \circ \lambda_y) \otimes q_y$ denotes the function sending $z \in Z_{n+1}$ to $a(\lambda_y(z)) \otimes q_y(z) \in M_{r_n} \otimes M_{s_n} \cong M_{r_{n+1}}$, where we are using a fixed isomorphism $M_{r_n} \otimes M_{s_n} \cong M_{r_{n+1}}$. In particular, this means that we must have $r_n s_n = r_{n+1}$.
\pars

The AH-algebra we are interested in is given by $A = \ilim_n \gekl{A_n, \varphi_n}$.

We would like unital homomorphisms, so that requires $\varphi_n(p_n) = p_{n+1}$, and thus $p_{n+1} = \sum_{y \in \cY(n)} (p_n \circ \lambda_y) \otimes q_y$. 
\pari

Moreover, we want injective homomorphisms, i.e., $\bigcup_{y \in \cY(n)} \lambda_y(Z_{n+1}) = Z_n$. 
\pars

For the construction of Cartan subalgebras and groupoid models, it is crucial to require that, over each connected component of $Z_n$, $p_n$ is a sum of line bundles, and that, again over each connected component $Z_{n+1}^j$ of $Z_{n+1}$, $q_y$ is zero or a line bundle, for each $y \in \cY(n)$. More precisely, we require that for all $y \in \cY(n)$, there exists an index $j(y)$ such that $q_y \vert_{Z_{n+1}^{j(y)}}$ is a line bundle and $q_y \vert_{Z_{n+1}^j} = 0$ for all $j \neq j(y)$. The relation $p_{n+1} = \sum_{y \in \cY(n)} (p_n \circ \lambda_y) \otimes q_y$ implies that if, over each connected component of $Z_n$, $p_n$ is a sum of line bundles, and if over each connected component of $Z_{n+1}$, $q_y$ is zero or a line bundle, then $p_{n+1}$ is a sum of line bundles over each connected component of $Z_{n+1}$.

Our setting is similar as the one in \cite{Niu}, except that in \cite{Niu}, the connecting maps are allowed to differ from the ones we consider up to an inner automorphism. This, however, does not change the inductive limit C*-algebra up to isomorphism. Moreover, as explained above, we need the additional requirements regarding $p_n$ and $q_y$. The setting in \cite[Example~3.1.6]{Ror} is a special case of our situation, where $p_n$ and $q_y$ are trivial.

Here are some more examples which fit naturally into our setting.
\nopar

\begin{itemize}
\item Villadsen algebras of the first kind \cite{Vil98}.
\item Tom's examples of non-classifiable C*-algebras \cite{Toms}.
\item Goodearl algebras \cite{Goo} (see also \cite[Example~3.1.7]{Ror}).
\item AH-algebras models for dynamical systems in \cite[Example~2.5]{Niu}
\item Villadsen algebras of the second kind \cite{Vil99}. Note that the identification $\alpha$ in \cite{Vil99} corresponds to our isomorphism $M_{r_n} \otimes M_{s_n} \cong M_{r_{n+1}}$.
\end{itemize}
\pars

\bremark
General AH-algebras might not admit inductive limit descriptions with generalized diagonal connecting maps. In particular, there might not exist eigenvalue functions $\lambda_y$ from $Z_{n+1}$ to $Z_n$ which are continuous.
\eremark

Now assume that we are given an AH-algebra $A = \ilim_n \gekl{A_n, \varphi_n}$ as above, with unital, injective, generalized diagonal connecting maps $\varphi_n$. Let us construct groupoid  models for $A$.

Recall the construction of Cartan subalgebras in inductive limits in \cite[Theorem~3.6]{BL17} and \cite[\S~5]{Li18} as in \S~\ref{ss:CartanLimC}.

Let $(A_n, B_n)$ be as in \cite[\S~6.1]{Li18}. Let $(G_n,\Sigma_n)$ be a twisted groupoid model for $(A_n, B_n)$ as in \cite[\S~6.1]{Li18}. More precisely, we apply \cite[Lemma~6.1 and Lemma~6.2]{Li18} to each direct summand $p_n^i ( C(Z_n^i) \otimes M_{r_n} ) p_n^i$ and then form the disjoint union over $i$. Here we are using our assumption that $p_n^i$ is a sum of line bundles.

Now we follow \cite[\S~6.1]{Li18} to construct groupoid models for the connecting maps. We set
$$
 A[\varphi_n] = \bigoplus_{y \in \cY(n)} ((p_n \circ \lambda_y) \otimes q_y) (C(Z_{n+1}) \otimes M_{r_{n+1}}) ((p_n \circ \lambda_y) \otimes q_y) \subseteq A_{n+1}
$$
and let $B[\varphi_n] \defeq \bigoplus_{y \in \cY(n)} B_{y,q}$, where $B_{y,q}$ is the Cartan subalgebra of 
$$
 A_{y,q} \defeq ((p_n \circ \lambda_y) \otimes q_y) (C(Z_{n+1}) \otimes M_{r_{n+1}}) ((p_n \circ \lambda_y) \otimes q_y)
$$
as constructed above, following \cite[\S~6.1]{Li18}. We have 
$$
 A_{y,q} \cong (p_n \circ \lambda_y)  (C(Z_{n+1}^{j(y)} \otimes M_{r_n}) (p_n \circ \lambda_y).
$$
Set
$$
 A_y \defeq (p_n \circ \lambda_y)  (C(Z_{n+1}^{j(y)} \otimes M_{r_n}) (p_n \circ \lambda_y),
$$
and let $B_y$ be the image of $B_{y,q}$ under the isomorphism $A_{y,q} \cong A_y$. Let $(G_y,\Sigma_y)$ be the twisted groupoid model for $(A_y,B_y)$ as in \cite[\S~6.1]{Li18}. Let $p_y: \: (G_y,\Sigma_y) \to (G_n,\Sigma_n)$ be the groupoid model for $A_n \to A_y, \, a \ma a \circ \lambda_y$. Now we define
$$
 (H_n,T_n) \defeq \coprod_{y \in \cY(n)} (G_y,\Sigma_y).
$$
We have $(H_n,T_n) \subseteq (G_{n+1},\Sigma_{n+1})$. Let $i_{n+1}$ denote the canonical inclusion. Moreover, construct a projection $p_n: \: (H_n,T_n) \onto (G_n,\Sigma_n)$ given by
$$
 (H_n,T_n) = \coprod_{y \in \cY(n)} (G_y,\Sigma_y) \overset{\coprod_y p_y}{\lori} (G_n,\Sigma_n).
$$
We obtain twisted groupoid morphisms
$$
 (G_n,\Sigma_n) \overset{p_n}{\leftarrow} (H_n,T_n) \to (G_{n+1},\Sigma_{n+1}),
$$
where the second map is the open embedding induced by the inclusion $A[\varphi_n] \into A_{n+1}$, which are groupoid models for the homomorphisms $A_n \to A[\varphi_n] \to A_{n+1}$, as in \cite[\S~6.2]{Li18}.

Define $(G_{n,0}, \Sigma_{n,0}) \defeq (G_n,\Sigma_n)$, $(G_{n,m+1}, \Sigma_{n,m+1}) \defeq p_{n+m}^{-1}(G_{n,m},\Sigma_{n,m})$ for all $n$ and $m = 0, 1, \dotsc$, $(\bar{G}_n,\bar{\Sigma}_n) \defeq \plim_m \gekl{(G_{n,m},\Sigma_{n,m}), p_{n+m}}$ for all $n$. Moreover, the inclusions $(H_n,T_n) \into (G_{n+1},\Sigma_{n+1})$ induce embeddings with open image $\bar{i}_n: \: (\bar{G}_n,\bar{\Sigma}_n) \into (\bar{G}_{n+1},\bar{\Sigma}_{n+1})$. Define $(\bar{G},\bar{\Sigma}) \defeq \ilim \gekl{(\bar{G}_n,\bar{\Sigma}_n), \bar{i}_n}$. As explained in \cite[\S~5]{Li18}, $(\bar{G},\bar{\Sigma})$ is a groupoid model for $(A,B)$ in the sense that we have a canonical isomorphism $A \isom C^*_r(\bar{G},\bar{\Sigma})$ sending $B$ to $C_0(\bar{G}^{(0)})$.

The following is now an immediate consequence of Theorem~\ref{thm:LimCartan} and Proposition~\ref{prop:HomGPDmodel} (see \cite{BL17} and \cite[\S~5]{Li18}).
\btheo
\label{thm:DiagDiagConn}
$B \defeq \ilim_n \gekl{B_n, \varphi_n}$ is a C*-diagonal of $A$, and we have $(A,B) \cong (C^*_r(\bar{G},\bar{\Sigma}),C_0(\bar{G}^{(0)})$.
\etheo

The following follows immediately.
\nopar

\bcor
\label{cor:DiagDiagConn_Ex}
The following classes of examples have C*-diagonals:
\begin{itemize}
\item Villadsen algebras of the first kind \cite{Vil98},
\item Tom's examples of non-classifiable C*-algebras \cite{Toms},
\item Goodearl algebras \cite{Goo} (see also \cite[Example~3.1.7]{Ror}), 
\item AH-algebras models for dynamical systems in \cite[Example~2.5]{Niu}, 
\item Villadsen algebras of the second kind \cite{Vil99}.
\end{itemize}
\ecor
\pars

\bcor
\label{cor:sr}
For each $m = 2, 3, \dotsc$ or $m = \infty$, there exists a twisted groupoid $(\bar{G},\bar{\Sigma})$ such that $C^*_r(\bar{G},\bar{\Sigma})$ is a unital, separable, simple C*-algebra of stable rank $m$, whereas $C^*_r(\bar{G})$ is a unital, separable, simple C*-algebra of stable rank one. 
\ecor
\nopar

\bproof
For $m = 2, 3, \dotsc$ or $m = \infty$, consider a Villadsen algebra $A = \ilim_n \gekl{A_n, \varphi_n}$ of the second type which is separable, simple and of stable rank $m$ as in \cite{Vil99}. Let $(\bar{G},\bar{\Sigma})$ be a twisted groupoid model for $A$, which exists by Corollary~\ref{cor:DiagDiagConn_Ex}. Using the groupoid models for building blocks and connecting maps from \cite[\S~6]{Li18}, it is straightforward to check that $C^*_r(\bar{G}) \cong \ilim_n \gekl{A_n,\ti{\varphi_n}}$, where $\ti{\varphi}_n(a) = \sum_{y \in \cY(n)} (a \circ \lambda_y) \otimes \ti{q}_y$, where $\ti{q}_y = 1_{Z_{n+1}^{j(y)}} \otimes e_y$, and $\menge{e_y}{y \in \cY(n), \, j(y) = y}$ are pairwise orthogonal standard rank one projections (with exactly one value $1$ on the diagonal and $0$ everywhere else). Also note that the construction in \cite{Vil99} starts with the projection $p_1$ corresponding to the trivial line bundle. Now it is an immediate consequence that $C^*_r(\bar{G})$ is a unital, separable, simple C*-algebra, and it follows from \cite[\S~4]{EHT} that $C^*_r(\bar{G})$ has stable rank one. 
\eproof
\pars

\bquestion
In the situation of Corollary~\ref{cor:DiagDiagConn_Ex}, we know that, in particular, $\bar{G}^{(0)} \cong \plim_n \gekl{G_n^{(0)}, p_n}$. Does it follow that for all groupoid models for non-classifiable C*-algebras from \cite{Vil99,Toms}, we have $\dim \bar{G}^{(0)} = \infty$?
\equestion

\end{document}